\theoremstyle{plain}
\newtheorem{thm}{Theorem}[section]
\newtheorem{defn}{Definition}
\newtheorem{lem}[thm]{Lemma}
\newtheorem{prop}[thm]{Proposition}
\theoremstyle{remark}
\newtheorem{ex}{\bf{Example}}[section]
\newtheorem{rem}[thm]{\bf{Remark}}
\numberwithin{equation}{section}
\newcommand{\R}{\mathbb{R}}
\newcommand{\bS}{\mathbb{S}}
\newcommand{\cB}{\mathcal{B}}
\newcommand{\USC}{{USC\,}}
\newcommand{\LSC}{{LSC\,}}
\newcommand{\UC}{{UC\,}}
\newcommand{\ep}{\varepsilon}
\newcommand{\Gam}{\Gamma}
\newcommand{\Om}{\Omega}
\newcommand{\ol}{\overline}
\newcommand{\Div}{{\rm div}\,}
\newcommand{\tr}{{\rm tr}}
\DeclareMathOperator*{\limsups}{limsup^\ast}
\DeclareMathOperator*{\liminfs}{liminf_\ast}
\begin{document}
\title{Quasiconvexity preserving property for fully nonlinear nonlocal parabolic equations}

\author[T. Kagaya]{Takashi Kagaya}
\address[T. Kagaya]{Graduate school of Engineering, Muroran Institute of Technology, 27-1 Mizumoto-cho, Hokkaido, 050-8585, Japan}
\email{kagaya@mmm.muroran-it.ac.jp}

\author[Q. Liu]{Qing Liu}
\address[Q. Liu]{Geometric Partial Differential Equations Unit, Okinawa Institute of Science and Technology Graduate University, 1919-1 Tancha, Onna-son, Kunigami-gun, 
Okinawa, 904-0495, Japan, 
}
\email{qing.liu@oist.jp}

\author[H. Mitake]{Hiroyoshi Mitake}
\address[H. Mitake]{
Graduate School of Mathematical Sciences, 
University of Tokyo, 
3-8-1 Komaba, Meguro-ku, Tokyo, 153-8914, Japan}
\email{mitake@ms.u-tokyo.ac.jp}

     
\keywords{quasiconvexity, nonlocal parabolic equations, viscosity solutions, comparison principle}
\subjclass[2010]{35D40, 35K15, 52A01} 
\date{\today}

\maketitle


\begin{abstract}
This paper is concerned with a general class of fully nonlinear parabolic equations with monotone nonlocal terms. We investigate the quasiconvexity preserving property of positive,  spatially coercive viscosity solutions. 
We prove that if the initial value is quasiconvex, the viscosity solution to the Cauchy problem stays quasiconvex in space for all time. Our proof can be regarded as a limit version of that for power convexity preservation as the exponent tends to infinity. We also present several concrete examples to show applications of our result. 
\end{abstract}

\section{Introduction}

In this paper, we study a class of fully nonlinear nonlocal parabolic equations:
\begin{numcases}{}
u_t+F(u, \nabla u, \nabla^2 u,K\cap \{u(\cdot, t)< u(x,t)\})=0 & in $ \mathbb{R}^n\times(0, \infty)$, \label{E1}\\
u(\cdot,0)=u_0 & in $ \mathbb{R}^n$,  \label{initial}
\end{numcases}
where $u:\R^n\times[0,\infty)\to\R$ is a unknown function,  and $u_t$, $\nabla u$ and $\nabla^2u$ 
denote the time derivative, the spatial gradient and Hessian of $u$, respectively. 
Here $K\subset\R^n$ is a compact set, the initial condition $u_0:\R^n\to\R$ is in $\UC(\R^n)$, 
where $\UC(\R^n)$ is the set of uniformly continuous functions on $\R^n$, 
and $F: \R\times (\R^n\setminus \{0\})\times \bS^{n}\times \cB_K\to \R$ is a given continuous function, where $\bS^n$ denotes the space of $n \times n$ 
real symmetric matrices and $\cB_K$ represents the collection of all measurable subsets of $K$. 

The goal of this paper is to investigate the preservation of spatial quasiconvexity of viscosity solutions to \eqref{E1}, \eqref{initial}. Here, a function $u\in C(\R^n\times [0, \infty))$ is said to be \textit{spatially quasiconvex} if all sublevel sets of $u(\cdot, t)$ are convex in $\R^n$, or equivalently, 
\[
u(\lambda y+(1-\lambda)z, t)\leq \max\{u(y, t), u(z, t)\}
\]
holds for all $y, z\in \R^n$, $t\geq 0$ and $\lambda\in (0, 1)$.

Our work is closely related to \cite{C}, where a general class of set evolutions with nonlocal terms is shown to preserve the convexity of the initial set. A typical example is the level set equation of the surface evolution equation
\begin{equation}\label{eq:ex1}
V=
a +bm(K\cap\Omega_t)-c\Div_{\Gam_{t}}\xi(n(x)),
\end{equation}
where $a\in \R$, $b\ge 0$ and $c\ge0$ are given constants,  $V$ is the outward 
\textit{normal velocity} of an evolving compact hypersurface $\Gam_{t}$, $\Omega_t$ is the set enclosed by $\Gamma_t$, $\xi$ denotes the so-called \textit{Cahn-Hoffman vector}, and $n(x)$ denotes the \textit{unit normal} to $\Gam_{t}$ at $x$
pointing outward $\Om_{t}$. 
Here, $m(A)$ represents the $n$-dimensional Lebesgue measure of $A\subset\R^n$ and $\Div_{\Gam_{t}}\xi(n(x))$ stands for the \textit{anisotropic mean curvature} of $\Gam_t$ at $x$. We shall give precise assumptions on $\xi$ in Section \ref{sec:nonlocal-geo}. 

Such nonlocal evolutions can be reformulated via the so-called level set method as geometric parabolic equations, which in our context requires $F$ to satisfy
\begin{equation}\label{geometrical-pro}
\begin{aligned}
F(r_1, c_1 p, c_1X & +c_2 p\otimes p, A)=c_1F(r_2, p, X, A)\\
&\text{for all $c_1, c_2\in \R$, $r_1, r_2\in \R$, $p\in \R^n\setminus \{0\}$, $X\in \bS^{n}$, $A\in\cB_K$};
\end{aligned}
\end{equation}
in particular, $F$ is independent of the unknown. We refer to \cite{GBook} for more details about the level set formulation. Such equations also appear in the singular limit of some nonlocal reaction diffusion equations \cite{CHL}.

It is natural to ask whether it is possible to obtain the same result for nonlocal evolution  equations without  
assuming \eqref{geometrical-pro} in order to allow broader applications. In this paper we give an affirmative answer to this question. Under a set of relaxed assumptions, we show the convexity preserving property for all sublevel sets of solutions and present several concrete examples of applications. 

Let us begin with our assumptions on the operator $F$. 
\begin{enumerate}
\item[(F1)] $F$ is \textit{proper} and \textit{degenerate elliptic}; namely, for any $p\in \R^n\setminus \{0\}$ and $A\in \cB_K$,
\[
F(r_1, p, X, A)\leq F(r_2, p, X, A)
\]
holds for all $r_2\geq r_1$ and $X\in \bS^n$, and 
\begin{equation}\label{elliptic}
F(r, p, X_1, A)\leq F(r, p, X_2, A)
\end{equation}
holds for all $r\in \R$, $X_1, X_2\in\bS^n$ satisfying $X_1\geq X_2$. 
\item[(F2)] $F$ is locally bounded in the sense that for each $R>0$, there holds
\[
\sup\{|F(r, p, X, A)|:  r\in \R, |p|\leq R\text{ with }p\neq 0, |X|\leq R, A\in \cB_K\}<\infty.  
\]
\item [(F3)] $F$ is continuous in $\R\times (\R^n\setminus \{0\})\times \bS^{n}\times \cB_K$ with the topology of $\cB_K$ given by $d(A_1, A_2)=m(A_1\triangle A_2)$, where $m$ denotes the Lebesgue measure and $A_1\triangle A_2$ stands for the symmetric difference of $A_1$ and $A_2$, 
that is  $A_1\triangle A_2:=(A_1\cup A_2)\setminus (A_1\cap A_2)$ for all $A_1, A_2\in\cB_K$. 
Moreover, for any $R>0$, there exists a modulus of continuity $\omega_R$ such that
\begin{equation}\label{usc-set}
F(r, p, X, A_1)-F(r, p, X, A_2)\leq \omega_R\left(m(A_1\triangle A_2)\right)
\end{equation}
for all $r\in \R$, $p\in \R^n\setminus \{0\}$ with $|p|\leq R$, $X\in \bS^n$ and $A_1, A_2\in \cB_K$. 
\item[(F4)] $F$ is \textit{monotone} with respect to the set argument; namely, 
\[
F(r, p, X, A_1)\leq F(r, p, X, A_2)
\] 
holds for all $r\in \R$, $p\in \R^n\setminus \{0\}$, $X\in \bS^n$, $A_1, A_2\in \cB_K$ with $A_1\subset A_2$. 
\item[(F5)] $F$ satisfies the \textit{structure condition} for the comparison principle: 
There exists a modulus of continuity $\omega:[0,\infty)\to[0,\infty)$ 
such that 
\[
F(r, p_1, X_1, A)-F(r, p_2, X_2, A)\le \omega\left(\frac{|Z||p_1-p_2|}{\min\{|p_1|, |p_2|\}}+|p_1-p_2|+\alpha\right) 
\]
for all $\alpha \ge 0$, $r\in \R$, $p_1, p_2\in \R^n\setminus\{0\}$, $A\in \cB_K$, and $X_1, X_2\in\bS^n$ satisfying 
\[
\begin{pmatrix}
X_1 & 0\\ 
0 & -X_2
\end{pmatrix} 
\le 
\begin{pmatrix}
Z & -Z\\ 
-Z & Z
\end{pmatrix} 
+
\alpha
\begin{pmatrix}
I & I\\ 
I & I
\end{pmatrix} 
\]
for $Z\in\bS^n$, where we denote by $I$ the $n\times n$ identity matrix. 
\end{enumerate} 

\begin{enumerate}
\item[(F6)] There exists $\mu\in C(\R)$ such that
\[
 \sup_{r\in \R,\ A\in \cB_K} |F(r, p, X, A)-\mu(r)|\to 0\quad \text{as $(p, X)\to (0, 0)$.}
 \]
 \end{enumerate}
The assumption (F6) immediately implies that $F^\ast(r, 0, 0, A)=F_\ast(r, 0, 0, A)=\mu(r)$
for all $r>0$ and $A\in \cB_K$, where we denote by $F_\ast, F^\ast$ the upper and lower semicontinuous envelope of $F$, respectively (see \cite{CIL, GBook} for definitions). 

The ellipticity \eqref{elliptic} of $F$ yields 
\[
F_\ast(r, 0, X_1, A)\leq F_\ast(r, 0, X_2, A), \quad F^\ast(r, 0, X_1, A)\leq F^\ast(r, 0, X_2, A),
\]
for all $r\in \R$, $X_1, X_2\in\bS^n$ satisfying $X_1\geq X_2$. In particular, we have
\[
F^\ast(r, 0, X_1, A)\leq \mu(r)\leq F_\ast(r, 0, X_2, B)\quad \text{for all $r\in \R$, $X_1\geq 0\geq  X_2$ and $A, B\in \cB_K$.}
\]

We stress that the monotonicity (F4) is crucial for validity of the comparison principle, which facilitates our analysis. See \cite{C, Sl, DKS, Sr} for comparison results for monotone evolution equations in different settings. On the other hand, in the non-monotone case, one cannot expect the comparison principle to hold and alternative methods are needed to prove uniqueness of solutions and other related properties (see \cite{ACM, BL, BLM, KK} for instance).

In order to clearly demonstrate our arguments for the convexity property,  throughout this paper we shall only consider viscosity solutions to \eqref{E1}, \eqref{initial} that are uniformly positive and coercive in space, that is,  
\begin{equation}\label{ineq:positive}
u\ge c_0\quad\text{in} \  \R^n\times [0, \infty), \quad \text{for some $c_0>0$}
\end{equation} 
and
\begin{equation}\label{coercive}
\inf_{|x|\geq R,\ t \le T}u(x, t)\to \infty\quad \text{as $R\to \infty$ for any $T\geq 0$.}
\end{equation}
We actually include further assumptions on $u_0\in \UC(\R^n)$ to guarantee the existence and uniqueness of solutions $u\in C(\R^n\times [0, \infty))$ of \eqref{E1}, \eqref{initial}, satisfying \eqref{ineq:positive}, \eqref{coercive}. 
Moreover, we can show that
\begin{equation}\label{uc-preserving}
|u(x, t)-u(y, t)|\leq \omega_0(|x-y|) \quad \text{for all $x, y\in \R^n$ and $t\geq 0$,}
\end{equation}
where $\omega_0$ denotes the modulus of continuity of $u_0$. See Theorem \ref{thm:exist} for details. 

In addition, we impose a key concavity condition on $F$ via a transformed operator $G_\beta$ defined by 
\begin{equation}\label{transformed op}
\begin{aligned}
&G_\beta(r, p, X, A)\\
&={1\over 1-\beta} r^\beta
 F\left(r^{1-\beta}, (1-\beta) r^{-\beta}p, (1-\beta) r^{-\beta} X+(\beta^2-\beta)r^{-\beta-1} p\otimes p, A\right) 
\end{aligned}
\end{equation}
for $0<\beta<1$, $r>0$, $p\in \R^n\setminus \{0\}$, $X\in\bS^n$ and $A\in \cB_K$.
\begin{enumerate}
\item[{\rm(F7)}] 
For any $\beta<1$ close to $1$,  
\begin{equation}\label{concave1}
(r, X)\mapsto G_\beta(r, p, X, A)\quad \text{is concave in $[c_0, \infty)\times \bS^n$}
\end{equation}
for any $p\in \R^n\setminus\{0\}$ and $A\in \cB_K$, and 
\begin{equation}\label{concave2}
r\mapsto r^\beta \mu(r^{1-\beta})\quad \text{is concave in $[c_0, \infty)$, 
}
\end{equation}
where $\mu$ is given by (F6).

\end{enumerate}


Let us state our main result. 
\begin{thm}[Quasiconvexity preserving property]\label{thm:main}
Assume {\rm(F1)--(F7)}. 
Let $u_0\in \UC(\R^n)$. Let $u\in C(\R^n\times [0, \infty))$ be the unique viscosity solution of \eqref{E1}, \eqref{initial} satisfying \eqref{ineq:positive}, \eqref{coercive} and \eqref{uc-preserving}. 
If $u_0$ is quasiconvex in $\R^n$, that is, 
$\{u_0<h\}$ is convex for all $h\in\R$, 
 then $u(\cdot, t)$ is quasiconvex in $\R^n$ for all $t\geq 0$.  
\end{thm}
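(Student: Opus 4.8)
The plan is to obtain quasiconvexity as a limit of power-convexity statements, exactly as the abstract suggests. For a fixed exponent $\beta\in(0,1)$ close to $1$, set $v^{(\beta)}:=\frac{1}{1-\beta}u^{1-\beta}$, which solves (in the viscosity sense) the transformed Cauchy problem with operator $G_\beta$ from \eqref{transformed op}; the chain-rule computation behind \eqref{transformed op} is routine and I will not reproduce it. The point of the transformation is that if $u$ is $(1-\beta)$-th-power convex — meaning $u^{1-\beta}$, hence $v^{(\beta)}$, is convex in space — then, letting $\beta\uparrow 1$, we recover spatial quasiconvexity of $u$. Indeed $\frac{1}{1-\beta}u^{1-\beta}=\frac{1}{1-\beta}e^{(1-\beta)\log u}$ is convex iff $\log u$ satisfies a one-parameter convexity condition that degenerates, as $\beta\to1$, to $\log u(\lambda y+(1-\lambda)z,t)\le\max\{\log u(y,t),\log u(z,t)\}$, i.e. to quasiconvexity of $u$. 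So the two genuine tasks are: (i) prove that power convexity of the initial datum is preserved by the $G_\beta$-flow, and (ii) pass to the limit $\beta\uparrow1$.

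For step (i) I would run the standard convexity-preserving argument for the transformed equation, in the double-variable viscosity form. Fix $t$, suppose $v^{(\beta)}(\cdot,t)$ fails convexity; consider, for small $\ep>0$ and large $\sigma>0$, the function
\[
\Phi(x,y,z,t)=v^{(\beta)}(x,t)-\tfrac12 v^{(\beta)}(y,t)-\tfrac12 v^{(\beta)}(z,t)-\tfrac{\sigma}{2}|x-\tfrac{y+z}{2}|^2-\tfrac{\ep}{t}
\]
(with the coercivity \eqref{coercive} ensuring the supremum over $\R^n\times\R^n\times\R^n\times(0,T]$ is attained, and \eqref{ineq:positive} keeping us in the region $[c_0,\infty)$ where (F7) applies). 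At a positive-time maximum one produces, via the Crandall--Ishii lemma, sub/super-jets of $v^{(\beta)}$ at the three points whose second-order parts satisfy the matrix inequality that feeds the concavity hypothesis \eqref{concave1}: because $(r,X)\mapsto G_\beta(r,p,X,A)$ is concave and the nonlocal term is handled by monotonicity (F4) together with the inclusion of sublevel sets forced by the midpoint geometry, one derives a contradiction with the equations at the three points. The monotone nonlocal term is precisely where (F4) enters: at the contact configuration the relevant sublevel set of $v^{(\beta)}$ at the midpoint is sandwiched between those at $y$ and $z$, so the set argument behaves monotonically and does not obstruct the inequality; (F3)/\eqref{usc-set} controls the measure-theoretic error terms. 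The flat part $F_\ast(r,0,0,A)=\mu(r)$ at vanishing gradient is covered by the scalar concavity \eqref{concave2}. I expect this to be the main obstacle: carefully organizing the nonlocal set inclusions at the (approximate) contact point, and making the limits in $\ep$ and $\sigma$ rigorous, is the technical heart; everything else is bookkeeping.

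For step (ii), I would use the stability/comparison machinery for \eqref{E1} recorded in Theorem \ref{thm:exist} together with the uniform modulus \eqref{uc-preserving}. Approximate the quasiconvex datum $u_0$ (if necessary) by data that are power-convex — for instance $u_0$ itself is $(1-\beta)$-power convex whenever $u_0$ is already quasiconvex and, say, monotone along rays; in general one perturbs $u_0$ slightly so that $u_0^{1-\beta}$ is convex while $u_0^{(\beta)}\to u_0$ uniformly as $\beta\uparrow1$, which is possible because $h\mapsto h^{1-\beta}$ converges locally uniformly to a constant profile and quasiconvexity is exactly the $\beta\to1$ envelope of power-convexity. By step (i) each solution $u^{(\beta)}$ with that datum is $(1-\beta)$-power convex for all $t\ge0$, hence satisfies
\[
u^{(\beta)}(\lambda y+(1-\lambda)z,t)\le\bigl(\lambda\,u^{(\beta)}(y,t)^{1-\beta}+(1-\lambda)\,u^{(\beta)}(z,t)^{1-\beta}\bigr)^{1/(1-\beta)}.
\]
Using \eqref{ineq:positive} to stay away from $0$ and \eqref{uc-preserving}, the right-hand side converges, as $\beta\uparrow1$ and $u^{(\beta)}\to u$ locally uniformly (by the comparison principle applied to the converging data and operators $G_\beta\to$ the $\log$-transformed operator), to $\max\{u(y,t),u(z,t)\}$. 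This yields
\[
u(\lambda y+(1-\lambda)z,t)\le\max\{u(y,t),u(z,t)\}\qquad\text{for all }y,z\in\R^n,\ t\ge0,\ \lambda\in(0,1),
\]
which is spatial quasiconvexity of $u(\cdot,t)$, completing the proof. The only delicate points here are justifying $u^{(\beta)}\to u$ (stability of viscosity solutions under the joint limit of data and operator, which follows from (F1)--(F6), the structure condition (F5) for comparison, and the uniform bounds \eqref{ineq:positive}--\eqref{uc-preserving}) and the elementary but careful limit of the power mean to the maximum.
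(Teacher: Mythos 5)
Your scheme hinges on step (i): that for each fixed exponent the flow preserves spatial power convexity of the initial datum, so that quasiconvexity can be recovered by letting the exponent degenerate. This step is false for the nonlocal equation, and the paper exhibits an explicit counterexample (Example \ref{ex:1}): for $u_t+|\nabla u|\,m(K\cap\{u(\cdot,t)<u(x,t)\})=0$ with the convex initial datum $u_0(x)=|x|+1$, the solution is computed in closed form and fails to be $q$-convex for \emph{every} finite $q\ge 1$ and every $t>0$. So the route ``prove power-convexity preservation for each finite exponent, then pass to the limit'' cannot be repaired: the intermediate statements you need are simply not true, precisely because the nonlocal term sees the whole sublevel set and destroys any finite-power convexity. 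There is also a sign error in your setup: the transformation compatible with \eqref{transformed op} is $v=u^{1/(1-\beta)}=u^{q}$ with $q=1/(1-\beta)$, not $v=\tfrac{1}{1-\beta}u^{1-\beta}$; convexity of $u^{1-\beta}$ as $\beta\uparrow1$ tends to \emph{log-convexity} (the strongest notion in the power-convexity scale), and the power mean $\bigl(\lambda a^{1-\beta}+(1-\lambda)b^{1-\beta}\bigr)^{1/(1-\beta)}$ converges to the geometric mean $a^{\lambda}b^{1-\lambda}$, not to $\max\{a,b\}$. Quasiconvexity is the $q\to\infty$ limit of convexity of $u^{q}$.

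The paper's proof avoids any preservation claim for finite exponents. For fixed $\lambda$ it forms the quasiconvex envelope $u_{\star,\lambda}$ of the \emph{actual solution} $u$ (see \eqref{eq:envelop2-lam}) and proves directly that $u_{\star,\lambda}$ is a viscosity \emph{supersolution} of \eqref{E1} (Lemma \ref{lem:super-general}); the power-convex envelope $u_{q,\lambda}$ of $u$ enters only as a locally uniform approximation of $u_{\star,\lambda}$ (Proposition \ref{prop:converge}, which uses \eqref{ineq:positive} and \eqref{coercive}), so that the test function for $u_{\star,\lambda}$ can be perturbed to touch $u_{q,\lambda}$, split via the Crandall--Ishii lemma into jets of $u^{q}$ at two points, and recombined using the concavity (F7) with $\beta=1-1/q$ together with (F3)--(F4) and Lemma \ref{lem meas} to control the nonlocal sets; letting $q\to\infty$ yields the supersolution inequality for $u_{\star,\lambda}$. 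Since $u_{\star,\lambda}(\cdot,0)=u_0$ when $u_0$ is quasiconvex and $u_{\star,\lambda}\le u$, the comparison principle (Theorem \ref{thm:comp}) forces $u\le u_{\star,\lambda}$, hence equality, which is quasiconvexity. If you want to salvage your write-up, you should replace step (i) by this envelope-supersolution argument; the three-point maximum-principle computation you sketch cannot close because the sublevel-set ``sandwiching'' you invoke at the contact configuration does not hold in the direction needed to preserve finite-power convexity.
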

Our result is applicable to a general class of nonlinear parabolic equations including the level set equations for nonlocal geometric evolutions studied in \cite{C}. See Section \ref{sec:applications} for applications of Theorem \ref{thm:main} to some concrete equations. We emphasize that these equations do not need to be geometric. The assumptions in Theorem \ref{thm:main} actually allow the operator $F$ to additionally depend on the unknown $u$. It is worth pointing out that the preservation of quasiconvexity can be obtained also for spatially Lipschitz solutions of a class of nonlinear local diffusion equations.
In Section \ref{sec:vishj}, as an example, we discuss a viscous Hamilton-Jacobi equation with a certain sublinear gradient term. Our quasiconvexity result suggests an interesting convexification effect of the gradient term, since it is known that the heat equation (without gradient terms) fails to preserve quasiconvexity of solutions in the space variable as shown in \cite{IS, IST2} (see \cite{HNS, CW} for counterexamples in other settings). 
In fact, the Laplace operator does not fulfill (F7). See Section \ref{sec:vishj} for more detailed discussions.

The novelty of our work lies not only at the more relaxed setting and wider applications, but also at an improved method in our proof. Instead of the set-theoretic arguments in  \cite{C}, we develop a PDE-based approach. 
Convexity/concavity of solutions is a classical topic of geometric properties of elliptic and parabolic equations. A non-exhaustive list of references includes \cite{Ko, Ka, Ke, CGM, BP} for classical solutions and \cite{GGIS, ALL, Ju, LSZ} for viscosity solutions. A generalized type, called power convexity/concavity, is investigated in \cite{IS1, IS3, ILS, IST} for various equations. One major idea 
applied in \cite{Ka, ALL, ILS} is to show the corresponding convex envelope of a solution is a supersolution of the equation and then use the comparison principle to conclude the proof. This machinery can be implemented also for quasiconvexity or quesiconcavity of classical solutions to the Dirichlet boundary problems for semilinear elliptic or parabolic equations \cite{DK, CS, IS0}.
For viscosity solutions of our fully nonlinear nonlocal problem, we essentially adopt the same strategy but incorporate an additional limit process, based on the fact that quasiconvexity can be regarded as a limit notion of the power convexity as the exponent tends to infinity. 

 For a fixed $\lambda\in (0, 1)$ and a given positive viscosity solution $u$ of \eqref{E1}, 
 in order to prove that the \textit{spatially quasiconvex envelope} $u_{\star, \lambda}$, defined by 
 \begin{equation}\label{eq:envelop2-lam}
 \begin{aligned}
 u_{\star, \lambda}(x, t)=\inf\bigg\{\max \{u(y, t), u(z, t)\}: x=\lambda y+ & (1-\lambda)z  \bigg\},\\
&\text{for } (x, t)\in \R^n\times (0, \infty),
\end{aligned}
 \end{equation}
 is a viscosity supersolution, we use the \textit{spatially power convex envelope} $u_{q, \lambda}$ defined by \eqref{eq:q-envelop} with exponent $q>1$ to approximate $u_{\star, \lambda}$ as $q\to \infty$. The concavity condition (F7), with the choice $\beta=1-{1/q}$, plays an important role of relating $u_{q, \lambda}$ to the supersolution property of \eqref{E1} for $q>1$ arbitrarily large. Also, thanks to the coercivity of $u$ in \eqref{coercive}, the large exponent approximation by $u_{q, \lambda}$ can be conducted in the sense of locally uniform convergence and thus the supersolution property can be passed on to $u_{\star, \lambda}$. More details will be given in Section~\ref{sec:quasiconvex}.

It is worth pointing out that although the power convex envelope is used as a key ingredient in our proof, in general one cannot expect the preservation of power convexity in the space variable, due to the nonlocal structure of the equation. We will clarify this via  Example \ref{ex:1} in Section~\ref{sec:quasiconvex}.
Since quasiconvexity is regarded as the weakest notion in the category of any power convexity, in this sense our generalized convexity preserving result in Theorem \ref{thm:main} can be considered to be optimal. 

As mentioned above, we need a comparison principle to complete the whole proof. Since the comparison results in the literature concerning nonlocal equations are only for bounded solutions, for our own purpose we include a comparison theorem, Theorem \ref{thm:comp}, for possibly unbounded solutions satisfying growth condition \eqref{growth comp}. Our proof is an adaptation of that in \cite{GGIS} to nonlocal problems. It would be interesting to obtain a similar quasiconvexity result  for non-monotone evolution equations, for which the comparison principle fails to hold in general.

We remark that the presence of the compact set $K$ in \eqref{E1} largely facilitates our arguments in this work. It enables us to show the unique existence of solutions that are coercive in space. The case when $K$ is unbounded seems more challenging. 


Our interests in \eqref{E1} are not restricted to the quasiconvexity of solutions.  In the upcoming work \cite{KLM2}, we give an optimal control interpretation for a class of first order nonlocal evolution equations and use it to study the large time behavior of solutions, which is not studied much. 

The rest of the paper is organized in the following way. In Section \ref{sec:pre} we recall the definition and some basic properties of  viscosity solutions of \eqref{E1}. We provide a comparison principle for our later applications in Section \ref{sec:cp}. Section \ref{sec:quasiconvex} is devoted to the proof of our main result, Theorem \ref{thm:main}. In Section \ref{sec:applications} we present several concrete examples with further discussions on the assumptions, especially (F7).

\subsection*{Acknowledgments}
TK was partially supported by Japan Society for the Promotion of Science (JSPS) through grants: KAKENHI \#18H03670, \#19K14572, \#20H01801, \#21H00990. 
QL was partially supported by the JSPS grant KAKENHI \#19K03574.
HM was partially supported by the JSPS grants: KAKENHI \#19K03580, \#20H01816, \#21H04431, \#19H00639.

\section{Preliminaries}\label{sec:pre} 
In this section, we recall some basic results on \eqref{E1}. 
We first recall the definition of viscosity solutions to \eqref{E1}. 
For a set $Q\subset \R^n\times [0,\infty)$, we denote by $\USC(Q)$ and $\LSC(Q)$, respectively, 
the set of the upper and lower semicontinuous functions in $Q$. 
\begin{defn}[Viscosity solutions]\label{def env-sol}
{\rm(i)} A function $u\in\USC(\R^n\times(0,\infty))$ is called a viscosity subsolution of \eqref{E1} if 
whenever there exist $(x_0, t_0)\in \R^n\times (0, \infty)$ and $\varphi\in C^2(\R^n\times (0, \infty))$ such that $u-\varphi$ attains a local maximum at $(x_0, t_0)$, 
\[
\varphi_t(x_0, t_0)+ F_\ast(u(x_0, t_0), \nabla \varphi(x_0, t_0), \nabla^2 \varphi(x_0, t_0), K\cap\{u(\cdot, t_0)< u(x_0, t_0)\})\leq 0.
\]
{\rm(ii)} A function $u\in\LSC(\R^n\times(0,\infty))$ is called a viscosity supersolution of \eqref{E1} if whenever there exist $(x_0, t_0)\in \R^n\times (0, \infty)$ and $\varphi\in C^2(\R^n\times (0, \infty))$ such that $u-\varphi$ attains a minimum at $(x_0, t_0)$, 
\[
\varphi_t(x_0, t_0)+F^\ast(u(x_0, t_0), \nabla \varphi(x_0, t_0), \nabla^2 \varphi(x_0,t_0), K\cap\{u(\cdot, t_0)\leq u(x_0, t_0)\})\geq 0.
\]
{\rm(iii)} A function $u\in C(\R^n\times (0, \infty))$ is called a viscosity solution of \eqref{E1} if it is both a viscosity subsolution and a viscosity supersolution. 
\end{defn}

We are always concerned with viscosity solutions in this paper, and the term ``viscosity" is omitted henceforth.



\begin{prop}[Refined test functions]\label{prop:equiv}
Assume {\rm(F1)--(F4)} and {\rm(F6)} hold. Let $u\in LSC(\R^n\times (0, \infty))$ (resp., $u\in USC(\R^n\times (0, \infty)$) be a locally bounded function in $\R^n\times (0, \infty)$. 
Then $u$ is a supersolution {\rm(}resp., subsolution{\rm)} of \eqref{E1} 
if whenever there exist $(x_0, t_0)\in \R^n\times (0, \infty)$ and $\varphi\in C^2(\R^n\times (0, \infty))$ such that $u-\varphi$ attains a local minimum {\rm(}resp., maximum{\rm)} at $(x_0, t_0)$, the following conditions hold:
\begin{itemize}
\item If $\nabla \varphi(x_0, t_0)\neq 0$, then 
\begin{equation}\label{f-super}
\varphi_t(x_0, t_0)+ F(u(x_0, t_0), \nabla \varphi(x_0, t_0), \nabla^2 \varphi(x_0, t_0), K\cap\{u(\cdot, t_0)\leq u(x_0, t_0)\})\geq 0
\end{equation}
\[
\left(\text{resp., }\ \varphi_t(x_0, t_0)+ F(u(x_0, t_0), \nabla \varphi(x_0, t_0), \nabla^2 \varphi(x_0, t_0), K\cap\{u(\cdot, t_0)< u(x_0, t_0)\})\leq 0
\right). 
\]
\item If $\nabla \varphi(x_0, t_0)= 0$ and $\nabla^2\varphi(x_0, t_0)=0$, then 
\begin{equation}\label{f-singular}
\varphi_t(x_0, t_0)+\mu(u(x_0, t_0)) \geq 0 \quad \left(\text{resp.,} \ \varphi_t(x_0, t_0)+\mu(u(x_0, t_0))\leq 0\right).
\end{equation}
\end{itemize}
\end{prop}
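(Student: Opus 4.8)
The plan is to prove the supersolution claim; the subsolution case follows by the symmetric argument (maxima in place of minima, $F_\ast$ in place of $F^\ast$, and the set $\{u(\cdot,t_0)<u(x_0,t_0)\}$ in place of $\{u(\cdot,t_0)\le u(x_0,t_0)\}$), so I treat only the former. Assume the two bulleted conditions hold, fix $\varphi\in C^2(\R^n\times(0,\infty))$ with $u-\varphi$ attaining a local minimum at some $(x_0,t_0)$, and write $r_0:=u(x_0,t_0)$ and $A_0:=K\cap\{u(\cdot,t_0)\le r_0\}$. Subtracting $|x-x_0|^4+(t-t_0)^4$ from $\varphi$ changes neither $\varphi_t$, $\nabla\varphi$, nor $\nabla^2\varphi$ at $(x_0,t_0)$, so I may assume the minimum is strict. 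The goal is
\[
\varphi_t(x_0,t_0)+F^\ast\bigl(r_0,\nabla\varphi(x_0,t_0),\nabla^2\varphi(x_0,t_0),A_0\bigr)\ge 0.
\]
If $\nabla\varphi(x_0,t_0)\ne 0$, then by (F3) the function $F$ is continuous near the point in question, so $F^\ast=F$ there and the goal is exactly \eqref{f-super}. If $\nabla\varphi(x_0,t_0)=0$ and $\nabla^2\varphi(x_0,t_0)=0$, then by the identity $F^\ast(r,0,0,A)=\mu(r)$ recorded after (F6) the goal reduces to $\varphi_t(x_0,t_0)+\mu(r_0)\ge 0$, which is \eqref{f-singular}.

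The only substantial case is $\nabla\varphi(x_0,t_0)=0$ with $X:=\nabla^2\varphi(x_0,t_0)\ne 0$, and here I would argue by moving the contact point. For small $b\in\R^n$ put $\psi_b:=\varphi-\langle b,\cdot-x_0\rangle$ and let $(x_b,t_b)$ minimize $u-\psi_b$ over a fixed small closed neighborhood of $(x_0,t_0)$; since the minimum of $u-\varphi$ is strict, for $|b|$ small the minimizer is interior, $(x_b,t_b)\to(x_0,t_0)$ and $u(x_b,t_b)\to r_0$, and $\psi_b$ touches $u$ from below at $(x_b,t_b)$ with spatial gradient $p_b:=\nabla\varphi(x_b,t_b)-b\to 0$, Hessian $\nabla^2\varphi(x_b,t_b)\to X$, and time derivative $\varphi_t(x_b,t_b)\to\varphi_t(x_0,t_0)$. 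Because $X\ne 0$, one may choose $b\to 0$ so that $p_b\ne 0$: the set of $b$ for which every minimizer satisfies $b=\nabla\varphi(x_b,t_b)$ is Lebesgue-null, since $b\mapsto x_b$ is a selection of the gradient of a concave function and the identity $b\equiv\nabla\varphi(x_b,t_b)$ near $b=0$ would force $X$ to be negative definite — the exceptional case, in which instead one uses the ellipticity \eqref{elliptic} to get $F^\ast(r_0,0,X,A_0)\ge F^\ast(r_0,0,0,A_0)=\mu(r_0)$ and reduces to \eqref{f-singular} at flat contact points near $(x_0,t_0)$ (in the extreme situation where $u(\cdot,t)$ is locally $x$-independent near $x_0$ one may simply use the space-independent test function $\varphi(x_0,t)$). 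Granting $p_b\ne 0$, \eqref{f-super} at $(x_b,t_b)$ gives
\[
\varphi_t(x_b,t_b)+F\bigl(u(x_b,t_b),p_b,\nabla^2\varphi(x_b,t_b),K\cap\{u(\cdot,t_b)\le u(x_b,t_b)\}\bigr)\ge 0.
\]

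It remains to let $b\to 0$; the only delicate issue is the nonlocal argument, since $K\cap\{u(\cdot,t_b)\le u(x_b,t_b)\}$ need not converge to $A_0$ in the metric $d(A_1,A_2)=m(A_1\triangle A_2)$ when $m(\{u(\cdot,t_0)=r_0\}\cap K)>0$. Fix $\eta>0$: since $u$ is uniformly continuous on $K\times[t_0-1,t_0+1]$ and $u(x_b,t_b)\to r_0$, one has $K\cap\{u(\cdot,t_b)\le u(x_b,t_b)\}\subset B^\eta:=K\cap\{u(\cdot,t_0)\le r_0+\eta\}$ for $|b|$ small, so by the monotonicity (F4),
\[
\varphi_t(x_b,t_b)+F\bigl(u(x_b,t_b),p_b,\nabla^2\varphi(x_b,t_b),B^\eta\bigr)\ge 0.
\]
Now $B^\eta$ is fixed, so letting $b\to 0$ and using the upper semicontinuity of $F^\ast$ in its first three arguments yields $\varphi_t(x_0,t_0)+F^\ast(r_0,0,X,B^\eta)\ge 0$. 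Finally $m(B^\eta\triangle A_0)\downarrow 0$ as $\eta\downarrow 0$ (here $K$ is bounded), and the one-sided estimate \eqref{usc-set}, which is inherited by $F^\ast$, gives $F^\ast(r_0,0,X,B^\eta)\le F^\ast(r_0,0,X,A_0)+\omega_R(m(B^\eta\triangle A_0))$; letting $\eta\downarrow 0$ produces $\varphi_t(x_0,t_0)+F^\ast(r_0,0,X,A_0)\ge 0$, as required. The properness (F1) of $F$ in $r$ is also convenient here, to absorb the discrepancy $u(x_b,t_b)\ne r_0$.

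I expect the main obstacle to be the perturbation step of the second paragraph — ensuring that the moved contact point carries a \emph{non-vanishing} spatial gradient, so that \eqref{f-super} rather than the unavailable singular inequality applies — which forces the case analysis on the sign of $X$ together with the measure-theoretic genericity argument. This is in essence the classical "refined test function" argument for geometric parabolic equations singular at $\nabla u=0$ (cf. \cite{CIL,GBook}); the genuinely new point is the passage to the limit in the nonlocal set argument, handled via (F3) and (F4) as above.
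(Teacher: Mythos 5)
Your overall architecture (reduce to the case $\nabla\varphi(x_0,t_0)=0$, $X:=\nabla^2\varphi(x_0,t_0)\neq 0$; relocate the contact point to recover a nonzero gradient; pass to the limit in the nonlocal argument via (F3)--(F4)) matches the paper's, but the crucial relocation step has a genuine gap. The paper does not perturb $\varphi$ by a linear term; it doubles variables with the quartic penalizer $|x-y|^4/\varepsilon$. The payoff of that choice is precisely in your ``exceptional case'': when the doubled minimum sits on the diagonal ($x_\varepsilon=y_\varepsilon$), the function $\psi_\varepsilon(x,t)=\varphi(y_\varepsilon,t)+|x-y_\varepsilon|^4/\varepsilon$ is a legitimate test function touching $u$ from below with \emph{both} $\nabla\psi_\varepsilon=0$ and $\nabla^2\psi_\varepsilon=0$ at the contact point, so \eqref{f-singular} applies and yields $\varphi_t+\mu(r_0)\ge 0$; simultaneously the $y$-minimization gives $\nabla^2\varphi(y_\varepsilon,t_\varepsilon)\le 0$, hence $X\le 0$ in the limit, and ellipticity closes the argument. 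Your linear perturbation $\psi_b=\varphi-\langle b,\cdot-x_0\rangle$ produces neither of these two facts: in the exceptional case the perturbed test function has vanishing gradient but Hessian $\nabla^2\varphi(x_b,t_b)\to X\neq 0$ at the contact point, so neither \eqref{f-super} nor \eqref{f-singular} is applicable, and you never actually derive $\varphi_t(x_0,t_0)+\mu(r_0)\ge 0$. The exceptional case is not negligible: take $u(x,t)\equiv g(t)$ and $\varphi(x,t)=g(t)+\tfrac12\langle X(x-x_0),x-x_0\rangle$ with $X<0$ (after your quartic strictification the same phenomenon persists); then for \emph{every} small $b$ the minimizer satisfies $\nabla\varphi(x_b,t_b)=b$, so the exceptional set contains a full neighborhood of $b=0$ and is certainly not Lebesgue-null. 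Your two fallback claims --- that exceptionality forces $X<0$, and that one can then ``reduce to \eqref{f-singular} at flat contact points'' --- are exactly the statements that need proof, and the only construction you offer (use $\varphi(x_0,t)$ when $u(\cdot,t)$ is locally $x$-independent) covers a special instance that you do not show exhausts the exceptional case. Without manufacturing a flat (vanishing gradient \emph{and} Hessian) test function touching $u$ from below, the argument does not close.

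A secondary, fixable error: in the limit passage for the nonlocal term you invoke uniform continuity of $u$ on $K\times[t_0-1,t_0+1]$ to get the inclusion $K\cap\{u(\cdot,t_b)\le u(x_b,t_b)\}\subset B^\eta$, but $u$ is only lower semicontinuous and locally bounded, so no such inclusion holds for an individual small $b$. The correct statement is measure-theoretic and asymptotic: $m\bigl(K\cap\{u(\cdot,t_b)\le u(x_b,t_b)\}\setminus A_0\bigr)\to 0$, which is the paper's Lemma \ref{lem meas}; combined with (F4) and \eqref{usc-set} exactly as in the paper's Case 1 it gives the same conclusion you want, so this part of your argument survives once restated. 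The opening reductions (strictification by a quartic, the two trivial cases) and the use of the upper semicontinuous envelope in the limit are fine.
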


We refer to \cite[Proposition 2.2.8]{GBook} for a similar property in the case of local singular equations.   
In order to prove Proposition \ref{prop:equiv}, let us first give an elementary result (see also \cite[Equation (5)]{Sl}). 
\begin{lem}[Continuity of measures]\label{lem meas}
Let $a\in \R$ and $\{a_\mu\}_{\mu>0}\subset \R$. Let $\{f_\mu\}_{\mu>0}$ be a family of measurable functions in $\R^n$. If $\liminf_{\mu\to 0} a_\mu\geq a$ and $f$ is a measurable function such that $f\geq \limsups_{\mu\to 0} f_\mu$, then 
\begin{equation}\label{meas lim1b}
m(K \cap(\{f<a\}\setminus \{f_\mu<a_\mu\}))\to 0\quad \text{as $\mu\to 0$.}
\end{equation}
If $\limsup_{\mu\to 0} a_\mu\leq a$ and $f$ is a measurable function such that $f\leq \liminfs_{\mu\to 0} f_\mu$, then
\begin{equation}\label{meas lim2b}
m( K\cap(\{f_\mu\leq a_\mu\}\setminus\{f\leq a\}))\to 0\quad \text{as $\mu\to 0$.}
\end{equation}
Here, we denote by $\limsups_{\mu\to 0} f_\mu$ and $\liminfs_{\mu\to 0} f_\mu$ the half relaxed limits of 
$\{f_\mu\}$ {\rm(}see \cite{GBook, CIL}{\rm)}. 
\end{lem}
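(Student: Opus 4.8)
The plan is to prove the two statements separately, treating them symmetrically; I will describe the first, since the second follows by an analogous argument (replacing $f_\mu$ by $-f_\mu$, $a_\mu$ by $-a_\mu$, and so on, with the appropriate passage between strict and non-strict inequalities). Fix a point $x$ in the set $K\cap\{f<a\}$, so $f(x)<a$. The key observation is that if $x$ were to belong to $\{f_\mu<a_\mu\}^c=\{f_\mu\geq a_\mu\}$ for a sequence $\mu_j\to 0$, then $\limsup_{j} f_{\mu_j}(x)\geq\limsup_j a_{\mu_j}\geq\liminf_{\mu\to 0}a_\mu\geq a$; but by definition of the upper half-relaxed limit, $\limsup^\ast_{\mu\to 0}f_\mu$ at $x$ dominates $\limsup_j f_{\mu_j}(x)$ (one may take the constant sequence $x_j=x$), hence $f(x)\geq\limsup^\ast_{\mu\to 0}f_\mu(x)\geq a$, contradicting $f(x)<a$. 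Therefore, for each such $x$ there is $\mu(x)>0$ with $x\in\{f_\mu<a_\mu\}$ for all $\mu<\mu(x)$. In other words, writing $E_\mu:=K\cap(\{f<a\}\setminus\{f_\mu<a_\mu\})$, every point of $K\cap\{f<a\}$ lies outside $E_\mu$ for all sufficiently small $\mu$, i.e. $\limsup_{\mu\to 0}\mathbf 1_{E_\mu}(x)=0$ pointwise on $K\cap\{f<a\}$, and trivially $E_\mu\subset K$ which has finite measure.

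The remaining step is to upgrade this pointwise statement to convergence of measures. Since $\mathbf 1_{E_\mu}\leq\mathbf 1_K$ and $m(K)<\infty$, and $\limsup_{\mu\to 0}\mathbf 1_{E_\mu}=0$ almost everywhere (in fact everywhere on $\{f<a\}$, while on its complement $E_\mu$ is empty), the reverse Fatou lemma gives
\[
\limsup_{\mu\to 0} m(E_\mu)=\limsup_{\mu\to 0}\int_{\R^n}\mathbf 1_{E_\mu}\,dm\leq\int_{\R^n}\limsup_{\mu\to 0}\mathbf 1_{E_\mu}\,dm=0,
\]
which is \eqref{meas lim1b}. One subtlety: half-relaxed limits and the hypotheses are stated for $\mu\to 0$ along the continuum, so strictly speaking I should argue by contradiction — if \eqref{meas lim1b} failed there would be $\delta>0$ and $\mu_j\to 0$ with $m(E_{\mu_j})\geq\delta$, and then apply the reverse Fatou lemma along that sequence, using that $\limsup_j\mathbf 1_{E_{\mu_j}}(x)\leq\limsup^\ast f_\mu$-type bound still forces the pointwise limsup to vanish. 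This reduces everything to the sequential reverse Fatou lemma, which is standard.

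The main obstacle — really the only place requiring care — is the correct handling of the half-relaxed limit $\limsup^\ast_{\mu\to 0}f_\mu$: one must remember that its value at $x$ is the supremum over sequences $(x_j,\mu_j)\to(x,0)$ of $\limsup_j f_{\mu_j}(x_j)$, not merely the pointwise $\limsup_\mu f_\mu(x)$, so the domination $f\geq\limsup^\ast f_\mu$ is a genuinely stronger hypothesis than pointwise domination; fortunately, for the argument above I only need the weaker pointwise consequence obtained by taking the constant spatial sequence, so the hypothesis is comfortably sufficient. Everything else is a routine application of the reverse Fatou lemma on the finite-measure set $K$.
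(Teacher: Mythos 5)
Your proof is correct and follows essentially the same route as the paper: both rest on the pointwise observation that every $x$ with $f(x)<a$ eventually lies in $\{f_\mu<a_\mu\}$ for all small $\mu$ (derived from the half-relaxed limit hypothesis via the constant spatial sequence), followed by a measure-theoretic limit step on the finite-measure set $K$. The only cosmetic difference is that you invoke the reverse Fatou lemma for indicator functions where the paper uses continuity from above of the measure on the decreasing unions $\bigcup_{\delta\le\mu}(\cdot)$; these are interchangeable, and your handling of the continuum-versus-sequence subtlety is adequate.
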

\begin{proof}
Let us first show \eqref{meas lim1b}. Since $f\geq \limsups_{\mu\to 0} f_\mu$ and $\liminf_{\mu\to 0}a_\mu\geq a$ hold, for any $x\in \R^n$ satisfying $f(x)<a$, there exists $\mu>0$ such that $f_\delta(x)<a_\delta$ for all $\delta\leq \mu$; that is to say
\[
\{f<a\}\subset \liminf_{\delta\to0}\{f_\delta<a_\delta\}:=\bigcup_{\mu>0}\bigcap_{\delta\leq \mu} \{f_\delta<a_\delta\}.
\]
 It follows that 
\[
m\left(\bigcap_{\mu>0}\bigcup_{\delta\leq \mu}K \cap\left(\{f<a\}\setminus \{f_\delta<a_\delta\}\right)\right)=0.
\]
Then for any $\varepsilon>0$, there exists $\mu>0$ small such that 
\[
m\left( \bigcup_{\delta\leq \mu} K \cap \left(\{ f<a\}\setminus \{f_\delta<a_\delta\}\right)\right)<\varepsilon,
\]
which immediately yields
\[
m(K \cap(\{f<a\}\setminus \{f_\mu<a_\mu\}))<\varepsilon.
\]
 We thus complete the proof of  \eqref{meas lim1b}. 

One can show  \eqref{meas lim2b} similarly. In fact, this time our assumptions enable us to  get 
\[
\limsup_{\delta\to0}\{f_\delta\le a_\delta\}:=\bigcap_{\mu>0}\bigcup_{\delta\leq \mu}\{f_\delta\leq a_\delta\}\subset \{f\leq a\}.
\]
Then, for any $\varepsilon>0$, 
\[
m\left(\bigcup_{\delta\leq \mu} K \cap\left(\{f_\delta\leq a_\delta\}\setminus \{f\leq a\}\right) \right)<\varepsilon
\]
when $\mu>0$ is taken small. We thus have 
\[
m\left(K \cap(\{f_\mu\leq a_\mu\}\setminus \{f\leq a\}) \right)<\varepsilon
\]
for any $\mu>0$ small, which concludes the proof of \eqref{meas lim2b}. 
\end{proof}

\begin{proof}[Proof of Proposition {\rm\ref{prop:equiv}}]
We only prove the statement for supersolutions. 
Suppose that there exist $\varphi\in C^2(\R^n\times (0, \infty))$ and $(x_0, t_0)$ such that $u-\varphi$ attains a strict local minimum at $(x_0, t_0)$. 
Without loss of generality we may assume  $\|\nabla\varphi\|_{L^\infty}\le R$ for some $R>0$. It is clear that $u$ satisfies the supersolution property if $\nabla \varphi(x_0, t_0)\neq 0$. We thus only consider the case when $\nabla \varphi(x_0, t_0)=0$.

Let 
\[
\Phi(x,y,t):=u(x, t)-\varphi(y, t)-{|x-y|^4\over \varepsilon}.
\] 
By a standard argument, we see that, for any $\varepsilon>0$ small,
\begin{equation}\label{equiv1sup}
\begin{aligned}
&\Phi \ \text{attains a local minimum at some} \ (x_\varepsilon, y_\varepsilon, t_\varepsilon)\in \R^n\times \R^n\times (0, \infty)
\ \text{satisfying} \\
&(x_\varepsilon, y_\varepsilon, t_\varepsilon) \to (x_0, x_0, t_0), \quad u(x_\varepsilon, t_\varepsilon)\to u(x_0, t_0)
\quad\text{as} \ \varepsilon\to0. 
\end{aligned}
\end{equation}
Noting that $y\mapsto \Phi(x_\varepsilon,y,t_\varepsilon)$ takes a local minimum at $y=y_\varepsilon$, 
we have 
\begin{equation}\label{ineq:elementary}
\nabla\varphi(y_\varepsilon, t_\varepsilon)={4\over \varepsilon}|x_\varepsilon-y_\varepsilon|^2(y_\varepsilon-x_\varepsilon), \quad
\nabla^2 \varphi(y_\varepsilon, t_\varepsilon)\leq {4\over \varepsilon}|x_\varepsilon-y_\varepsilon|^2 I+{8\over \varepsilon} (x_\varepsilon-y_\varepsilon)\otimes (x_\varepsilon-y_\varepsilon).  
\end{equation}

We here divide into two cases:
\begin{itemize}
\item Case 1.  $x_\varepsilon\neq y_\varepsilon$ along a sequence $\varepsilon=\varepsilon_j\to 0$;
\item Case 2. $x_\varepsilon=y_\varepsilon$ for all small $\varepsilon>0$. 
\end{itemize}

In Case 1, by \eqref{equiv1sup}, 
\begin{equation*}
(x, t)\mapsto u(x, t)-\varphi(x-x_\varepsilon+y_\varepsilon, t)-{|x_\varepsilon-y_\varepsilon|^4\over \varepsilon}
\end{equation*}
attains a local minimum at $(x_\varepsilon, t_\varepsilon)$. Thus we can adopt \eqref{f-super} to obtain
\begin{equation}\label{equiv7sup}
\varphi_t(y_\varepsilon, t_\varepsilon)+F(u(x_\ep, t_\ep), \nabla \varphi(y_\varepsilon, t_\varepsilon), \nabla^2\varphi(y_\varepsilon, t_\varepsilon),K\cap\{u(\cdot, t_\varepsilon)\leq u(x_\varepsilon, t_\varepsilon)\})\geq 0.
\end{equation}
Due to \eqref{equiv1sup} and the lower semicontinuity of $u$, by Lemma \ref{lem meas} we have
\begin{equation}\label{equiv8sup}
m\big(K\cap(\{u(\cdot, t_\varepsilon)\leq u(x_\varepsilon, t_\varepsilon)\}\setminus \{u(\cdot, t_0)\leq u(x_0, t_0)\})\big)\to 0\quad \text{as $\varepsilon\to 0$. }
\end{equation}

Set $a_\ep:=u(x_\ep, t_\ep)$, $\eta_\varepsilon:=\nabla\varphi(y_\varepsilon,t_\varepsilon)\not=0$, 
$Y_\varepsilon:=\nabla^2\varphi(y_\varepsilon,t_\varepsilon)$, 
$A_\varepsilon:=K\cap\{u(\cdot,t_\varepsilon)\le u(x_\varepsilon, t_\varepsilon)\}$, and 
$B:=K\cap\{u(\cdot,t_0)\le u(x_0, t_0)\}$. 
By \eqref{equiv7sup}, (F3), and (F4), 
\begin{align*}
0\le&\, 
\varphi_t(y_\varepsilon, t_\varepsilon)+F(a_\ep, \eta_\varepsilon, Y_\varepsilon, A_\varepsilon)\\
\le&\, \varphi_t(y_\varepsilon, t_\varepsilon)+F(a_\ep, \eta_\varepsilon, Y_\varepsilon, B)
+F(a_\ep, \eta_\varepsilon, Y_\varepsilon, A_\varepsilon)-F(a_\ep, \eta_\varepsilon, Y_\varepsilon, A_\varepsilon\cap B)\\
\le&\, 
\varphi_t(y_\varepsilon, t_\varepsilon)+F(a_\ep, \eta_\varepsilon, Y_\varepsilon, B)+\omega_R(m(A_\varepsilon\triangle (A_\varepsilon\cap B)))\\
=&\, 
\varphi_t(y_\varepsilon, t_\varepsilon)+F(a_\ep, \eta_\varepsilon, Y_\varepsilon, B)+\omega_R(m(A_\varepsilon\setminus B)). 
\end{align*}
By \eqref{equiv8sup}, sending $\varepsilon\to0$ yields 
\begin{equation}\label{equiv5sup}
\varphi_t(x_0, t_0)+F^\ast(u(x_0, t_0), 0, \nabla^2 \varphi(x_0, t_0), B)\geq 0.
\end{equation}

In Case 2, we see that 
$u-\psi_\varepsilon$ attains a local minimum at $(x_\varepsilon, t_\varepsilon)$, where $\psi_\varepsilon$ is given by
\begin{equation*}
\psi_\varepsilon(x, t):=\varphi(y_\varepsilon, t)+{|x-y_\varepsilon|^4\over \varepsilon}.
\end{equation*}
Since $x_\varepsilon=y_\varepsilon$, it is clear that 
\begin{equation*}
\nabla\psi_\varepsilon(x_\varepsilon, t_\varepsilon)=0, \quad \nabla^2 \psi_\varepsilon(x_\varepsilon, t_\varepsilon)=0.
\end{equation*}
It then follows from \eqref{f-singular} that 
\[ 
\varphi_t(y_\varepsilon, t_\varepsilon)
+ \mu(u(x_\ep, t_\ep))=(\psi_\varepsilon)_t (x_\varepsilon, t_\varepsilon) + \mu(u(x_\ep, t_\ep)) \geq 0. 
\]
Sending $\varepsilon\to 0$ yields 
\begin{equation} \label{limit-equivsup}
\varphi_t(x_0, t_0) + \mu(u(x_0,t_0)) \geq 0.
\end{equation}

By \eqref{ineq:elementary} we have 
\[
\nabla\varphi(y_\varepsilon, t_\varepsilon)=0, \quad \nabla^2\varphi(y_\varepsilon, t_\varepsilon)\leq 0.
\]
Sending $\varepsilon\to 0$, we obtain 
$\nabla\varphi(x_0, t_0)=0$, and $\nabla^2 \varphi(x_0, t_0)\leq 0$.
Combining these with \eqref{limit-equivsup}
 and applying (F6), we are led to  \eqref{equiv5sup} again. 
\end{proof}

\section{Comparison Principle}\label{sec:cp}
In this section, we present a comparison principle for \eqref{E1}.  
\begin{thm}[Comparison principle]\label{thm:comp}
Assume that {\rm(F1)--(F6)} hold. 
Let $u\in \USC(\R^n\times [0, \infty))$ and $v\in \LSC(\R^n\times [0, \infty))$ be, respectively, a subsolution and a supersolution to \eqref{E1}. 
Assume in addition that for any $T>0$, there exists $L_T>0$ such that 
\begin{equation}\label{growth comp}
u(x, t)\leq L_T(|x|+1), \quad v(x, t)\geq -L_T(|x|+1)\quad \text{for all $(x, t)\in \R^n\times [0, T]$.}
\end{equation}
If there exists a modulus of continuity $\omega_0$ such that
\begin{equation}\label{initial comp}
u(x, 0)-v(y, 0)\leq \omega_0(|x-y|)\quad \text{for all $x, y\in \R^n$,}
\end{equation}
then $u\leq v$ holds in $\R^n\times [0, \infty)$.
\end{thm}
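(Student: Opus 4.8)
The plan is to prove the comparison principle by the standard doubling-of-variables technique adapted to nonlocal parabolic equations, following the scheme in \cite{GGIS} but accounting carefully for the set argument in $F$.

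\medskip

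\textbf{Step 1: Reduction and the auxiliary function.} First I would argue by contradiction: suppose $\sup_{\R^n\times[0,T]}(u-v)=:\theta>0$ for some $T>0$. To localize, one uses the growth condition \eqref{growth comp}: add a penalization in $x$ of the form $\gamma\langle x\rangle$ (with $\langle x\rangle=\sqrt{1+|x|^2}$, say) or $\gamma(|x|^2+1)$ weighted so that $u(x,t)-v(x,t)-2\gamma\langle x\rangle\to-\infty$ as $|x|\to\infty$ uniformly on $[0,T]$, for each small $\gamma>0$. Also add a term $\sigma/(T-t)$ to handle the time boundary, so that the supremum of
\[
u(x,t)-v(y,t)-\frac{|x-y|^4}{\varepsilon}-\gamma\langle x\rangle-\gamma\langle y\rangle-\frac{\sigma}{T-t}
\]
is attained at an interior point $(x_\varepsilon,y_\varepsilon,t_\varepsilon)$ with $t_\varepsilon>0$; for $\gamma,\sigma$ small enough this supremum is still positive. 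Using a quartic penalization $|x-y|^4/\varepsilon$ (rather than quadratic) is the device from \cite{GGIS} that keeps the test gradients nonzero or controllably small — one gets $|x_\varepsilon-y_\varepsilon|^4/\varepsilon\to 0$ and $x_\varepsilon,y_\varepsilon\to \hat x$ along a subsequence, with $(u-v)(\hat x,\hat t)$ close to the (penalized) supremum.

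\medskip

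\textbf{Step 2: Applying the theorem on sums and the structure condition.} On the set where the common gradient $p_\varepsilon:=\frac{4}{\varepsilon}|x_\varepsilon-y_\varepsilon|^2(x_\varepsilon-y_\varepsilon)$ (plus the $\gamma$-corrections) is nonzero, I would invoke the parabolic theorem on sums (Crandall--Ishii--Lions) to get matrices $X_\varepsilon,Y_\varepsilon\in\bS^n$ and time-derivative values $\tau_1,\tau_2$ with $\tau_1-\tau_2=\sigma/(T-t_\varepsilon)^2>0$ satisfying the matrix inequality with $Z$ the Hessian of $|x-y|^4/\varepsilon$ evaluated at $(x_\varepsilon,y_\varepsilon)$ and $\alpha=O(\gamma)$ from the penalization. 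Then the sub/supersolution inequalities give
\[
\tau_1+F_\ast\big(u(x_\varepsilon,t_\varepsilon),\,p_\varepsilon+\gamma D\langle x_\varepsilon\rangle,\,X_\varepsilon,\,A_\varepsilon\big)\le 0,\qquad
\tau_2+F^\ast\big(v(y_\varepsilon,t_\varepsilon),\,p_\varepsilon-\gamma D\langle y_\varepsilon\rangle,\,Y_\varepsilon,\,B_\varepsilon\big)\ge 0,
\]
where $A_\varepsilon=K\cap\{u(\cdot,t_\varepsilon)<u(x_\varepsilon,t_\varepsilon)\}$ and $B_\varepsilon=K\cap\{v(\cdot,t_\varepsilon)\le v(y_\varepsilon,t_\varepsilon)\}$. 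Subtracting, the positive gap $\tau_1-\tau_2$ must be absorbed. The gradient/Hessian part is handled by (F5): $F(u(x_\varepsilon,t_\varepsilon),p_1,X_\varepsilon,\cdot)-F(u(x_\varepsilon,t_\varepsilon),p_2,Y_\varepsilon,\cdot)\le\omega(\tfrac{|Z||p_1-p_2|}{\min}+|p_1-p_2|+\alpha)$, which tends to $0$ as $\varepsilon,\gamma\to0$ because $|p_1-p_2|=O(\gamma)$ and $|Z|\cdot|p_1-p_2|/\min\{|p_1|,|p_2|\}$ stays bounded (this is exactly why the quartic penalization is used — it makes $|Z|/|p_\varepsilon|$ controlled). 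The properness in (F1), namely monotonicity in $r$, is used together with $u(x_\varepsilon,t_\varepsilon)-v(y_\varepsilon,t_\varepsilon)\ge\theta/2>0$ (it has the \emph{right} sign to help, since $F$ is nondecreasing in $r$).

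\medskip

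\textbf{Step 3: Controlling the nonlocal set term — the main obstacle.} The genuinely new point compared to local equations is the nonlocal argument. One needs $m(A_\varepsilon\setminus B_\varepsilon)\to0$ (or $m(A_\varepsilon\triangle B_\varepsilon)\to 0$ after intersecting) so that (F3)–(F4), in the form already exploited in the proof of Proposition~\ref{prop:equiv}, let us replace $A_\varepsilon$ by $B_\varepsilon$ up to $\omega_R(m(A_\varepsilon\setminus B_\varepsilon))$. Here I would use the key inclusion fact: if $u(z,t_\varepsilon)<u(x_\varepsilon,t_\varepsilon)$ then, since $(x_\varepsilon,y_\varepsilon,t_\varepsilon)$ is a maximum of the auxiliary function, $u(z,t_\varepsilon)-v(y_\varepsilon,t_\varepsilon)-(\text{penalty at }z)\le u(x_\varepsilon,t_\varepsilon)-v(y_\varepsilon,t_\varepsilon)-(\text{penalty at }x_\varepsilon)$ — but this does not directly compare $v(z)$ with $v(y_\varepsilon)$, so the clean inclusion used in Proposition~\ref{prop:equiv} (where $\varphi$ was smooth) is not available verbatim. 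The right substitute, following \cite{GGIS}, is to also double the set variable: one should take a \emph{common} reference point. Concretely, I expect the correct route is to compare sublevel sets of $u(\cdot,t_\varepsilon)$ and $v(\cdot,t_\varepsilon)$ using the bound $u(z,t_\varepsilon)\le v(z,t_\varepsilon)+\Theta_\varepsilon$ where $\Theta_\varepsilon:=\sup_z\{u(z,t_\varepsilon)-v(z,t_\varepsilon)\}$ is close to the maximum value; combined with $u(x_\varepsilon,t_\varepsilon)\ge v(y_\varepsilon,t_\varepsilon)+\theta/2$ and $x_\varepsilon\approx y_\varepsilon$, one deduces $\{u(\cdot,t_\varepsilon)<u(x_\varepsilon,t_\varepsilon)\}\subset\{v(\cdot,t_\varepsilon)\le u(x_\varepsilon,t_\varepsilon)-\theta/4+o(1)\}$ and then relates the latter to $\{v(\cdot,t_\varepsilon)\le v(y_\varepsilon,t_\varepsilon)\}$ up to a set of small measure via a continuity-of-measures argument (Lemma~\ref{lem meas} type, or outer regularity of Lebesgue measure on $K$). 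Making this measure estimate airtight — that the ``extra'' region $K\cap(\{v\le u(x_\varepsilon,t_\varepsilon)-\theta/4\}\setminus\{v\le v(y_\varepsilon,t_\varepsilon)\})$ has vanishing measure — is the heart of the proof and will require either a careful use of (F4) monotonicity (so that enlarging the set only increases $F$ in the supersolution inequality, which is the favorable direction) or restricting attention to a subsequence of $\varepsilon$ along which $t_\varepsilon\to\hat t$ and using lower semicontinuity. I would handle the case $p_\varepsilon=0$ (forcing also $X_\varepsilon,Y_\varepsilon$ small, up to the $\gamma$-terms) separately via the refined test-function criterion, Proposition~\ref{prop:equiv}, reducing to the scalar inequality $\tau_1+\mu(u(x_\varepsilon,t_\varepsilon))\le0\le\tau_2+\mu(v(y_\varepsilon,t_\varepsilon))$ and using continuity of $\mu$ plus $\tau_1-\tau_2>0$ and properness to get a contradiction. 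Finally, letting $\varepsilon\to0$, then $\gamma\to0$, then $\sigma\to0$ yields $\theta\le0$, a contradiction.

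\medskip

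\textbf{Summary of the difficulty.} The matrix-inequality and gradient-singularity parts are routine given (F5) and the quartic penalization of \cite{GGIS}; the main obstacle is the nonlocal set term: showing that the sublevel set appearing in the subsolution inequality is contained, up to Lebesgue-negligible error, in the sublevel set appearing in the supersolution inequality, so that (F3)–(F4) close the estimate. This is exactly where the monotonicity assumption (F4) is indispensable.
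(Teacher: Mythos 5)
Your Steps 1 and 2 follow the paper's strategy (doubling with a quartic penalization, Crandall--Ishii, (F5) for the gradient/Hessian part, properness for the $r$-argument), but Step 3 --- which you correctly identify as the heart of the matter --- contains a genuine gap. First, the inclusion you aim for points the wrong way: to close the argument via (F4) one needs the \emph{supersolution} set to be contained in the \emph{subsolution} set, i.e.\ $K\cap\{v(\cdot,\tilde t)\le v(\tilde y,\tilde t)\}\subset K\cap\{u(\cdot,\tilde t)<u(\tilde x,\tilde t)\}$, so that enlarging the set in the supersolution inequality preserves the sign; your stated goal $m(A_\varepsilon\setminus B_\varepsilon)\to 0$ is the reverse containment. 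Second, the substitute route you sketch --- passing through $\{v(\cdot,t_\varepsilon)\le u(x_\varepsilon,t_\varepsilon)-\theta/4+o(1)\}$ and then relating it to $\{v(\cdot,t_\varepsilon)\le v(y_\varepsilon,t_\varepsilon)\}$ ``up to small measure'' --- cannot be made airtight: these are sublevel sets of $v$ at heights differing by a fixed amount of order $\theta$, and nothing (not Lemma~\ref{lem meas}, which compares sublevel sets of \emph{converging} functions at \emph{converging} levels) forces their symmetric difference on $K$ to have small measure.

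The point you dismissed is in fact the one that works, and it gives an \emph{exact} inclusion with no measure estimate at all. Since $(\tilde x,\tilde y,\tilde t)$ is a global maximum of $\Phi$, evaluating $\Phi$ on the diagonal gives, for every $x$,
\[
u(x,\tilde t)-u(\tilde x,\tilde t)\le v(x,\tilde t)-v(\tilde y,\tilde t)-\frac{|\tilde x-\tilde y|^4}{\varepsilon^4}+2\alpha|x|^2-\alpha(|\tilde x|^2+|\tilde y|^2).
\]
The paper splits according to whether $\liminf_{\alpha\to0}|\tilde x-\tilde y|>0$ or $=0$ (for fixed $\varepsilon$). In the first case, because $K$ is \emph{bounded}, the quantity $-|\tilde x-\tilde y|^4/\varepsilon^4+2\alpha|x|^2-\alpha(|\tilde x|^2+|\tilde y|^2)$ is strictly negative for all $x\in K$ once $\alpha$ is small, whence $u(x,\tilde t)-u(\tilde x,\tilde t)<v(x,\tilde t)-v(\tilde y,\tilde t)$ on $K$ and the desired inclusion $K\cap\{v(\cdot,\tilde t)\le v(\tilde y,\tilde t)\}\subset K\cap\{u(\cdot,\tilde t)<u(\tilde x,\tilde t)\}$ holds exactly; only (F4) and (F1) are then needed before applying (F5). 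In the second case the gradients vanish in the limit $\alpha_i\to0$, and one avoids any set comparison by bounding the two set arguments crudely by $\emptyset$ and $K$ via (F4) and concluding with $X^\varepsilon\le 0\le Y^\varepsilon$ and (F6). You should replace your approximate-measure argument by this exact-inclusion argument; as written, your proof does not close.
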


The following result is an adaptation of \cite[Proposition 2.3]{GGIS}. 
\begin{prop}[Growth estimate]\label{prop growth}
Assume the same assumptions as in Theorem {\rm\ref{thm:comp}}. For any fixed $T>0$ and any $L>L_{T}$ large, there exists $M>0$ such that 
\[
u(x, t)-v(y, t)\leq L|x-y|+M(1+t)\quad \text{for all $x, y\in \R^n$ and $t\in [0, T)$.}
\]
\end{prop}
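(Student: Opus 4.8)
The plan is to run a doubling-of-variables argument together with the parabolic theorem on sums. The decisive feature of the hypotheses here is that the local bound (F2) is \emph{uniform} in the scalar variable $r$ and in the set variable $A\in\cB_K$, so that (F2) alone — without the finer structure condition (F5) or any information on the nonlocal sets — closes the estimate, with $M$ depending only on $L$, $T$, $n$, $K$ and $F$. Fix $T>0$ and $L>L_T$. I would first replace $|x-y|$ by the globally smooth $\psi(z):=\sqrt{1+|z|^2}-1$, which satisfies $|z|-1\le\psi(z)\le|z|$, $|\nabla\psi|\le1$ and $\|\nabla^2\psi\|\le1$ on $\R^n$; this removes any non-smoothness at $x=y$ and keeps all derivatives bounded independently of the auxiliary parameters. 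For small $\eta,\sigma>0$ (to be sent to $0$) and a large constant $M>0$ (fixed at the end) define, on $\R^n\times\R^n\times[0,T)$,
\[
\Phi(x,y,t):=u(x,t)-v(y,t)-L\psi(x-y)-M(1+t)-\eta\bigl(\langle x\rangle^2+\langle y\rangle^2\bigr)-\frac{\sigma}{T-t},
\]
with $\langle x\rangle^2:=1+|x|^2$. The growth bound \eqref{growth comp}, the super-linear penalty $-\eta(\langle x\rangle^2+\langle y\rangle^2)$, and the blow-up of $-\sigma/(T-t)$ at $t=T$ force $\Phi$ to attain its supremum at some $(\hat x,\hat y,\hat t)$. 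I would argue by contradiction: if $\sup\Phi>0$ then $u(\hat x,\hat t)>v(\hat y,\hat t)$, and comparing $0<\Phi(\hat x,\hat y,\hat t)$ with \eqref{growth comp} gives the a priori bounds $\eta|\hat x|,\eta|\hat y|\le C(L_T)$, \emph{uniform in} $\eta,\sigma$ — this is what ultimately keeps gradients and Hessians bounded.

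The boundary case $\hat t=0$ is handled first: by \eqref{initial comp}, $0<\Phi(\hat x,\hat y,0)\le\omega_0(|\hat x-\hat y|)-L\bigl(|\hat x-\hat y|-1\bigr)-M$, which is impossible once $M$ exceeds $L+\sup_{r\ge0}(\omega_0(r)-Lr)$ (finite for $L$ large, as $\omega_0$ has at most linear growth). Hence $\hat t\in(0,T)$, and I would apply the parabolic theorem on sums to $u(x,t)-v(y,t)-\phi(x,y,t)$, with $\phi$ the sum of the smooth terms in $\Phi$. This yields $\tau_1-\tau_2=\partial_t\phi(\hat x,\hat y,\hat t)=M+\sigma/(T-\hat t)^2\ge M$, gradients $p_1=L\nabla\psi(\hat x-\hat y)+2\eta\hat x$ and $p_2=L\nabla\psi(\hat x-\hat y)-2\eta\hat y$, and matrices $X,Y\in\bS^n$ with $(\tau_1,p_1,X)\in\overline{\mathcal P}^{2,+}u(\hat x,\hat t)$ and $(\tau_2,p_2,Y)\in\overline{\mathcal P}^{2,-}v(\hat y,\hat t)$, all of magnitude $\le R=R(L,L_T)$ thanks to the above a priori bounds and $\|\nabla^2\psi\|\le1$, uniformly in $\eta,\sigma$.

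Plugging these (closed) jets into the sub- and supersolution inequalities, with $F_*,F^*$ the envelopes of $F$, and subtracting gives
\[
M\le M+\frac{\sigma}{(T-\hat t)^2}\le F^*\bigl(v(\hat y,\hat t),p_2,Y,B\bigr)-F_*\bigl(u(\hat x,\hat t),p_1,X,A\bigr),
\]
where $A=K\cap\{u(\cdot,\hat t)<u(\hat x,\hat t)\}$ and $B=K\cap\{v(\cdot,\hat t)\le v(\hat y,\hat t)\}$. Since the bound in (F2) is uniform over all $r\in\R$ and all sets in $\cB_K$ — a uniformity inherited by $F_*$ and $F^*$ — both terms on the right are bounded in absolute value by the constant $C_R$ from (F2), so $M\le2C_R$. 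Choosing $M:=2C_R+1$, enlarged if needed to also rule out $\hat t=0$, is a contradiction. Therefore $\Phi\le0$ on $\R^n\times\R^n\times[0,T)$; using $\psi(z)\le|z|$ and letting first $\eta\to0$, then $\sigma\to0$, yields $u(x,t)-v(y,t)\le L|x-y|+M(1+t)$ for all $x,y\in\R^n$ and $t\in[0,T)$.

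The step I expect to be the crux is not a computation but the design of the penalizations so that $p_1,p_2,X,Y$ remain bounded \emph{as $\eta,\sigma\to0$}: this is exactly what the globally smooth $\psi$ with bounded Hessian and the a priori estimate $\eta|\hat x|,\eta|\hat y|=O(1)$ secure. Everything else is routine — in particular, and in contrast with the full comparison principle (Theorem~\ref{thm:comp}), one needs neither the structure condition (F5) nor any relation between the nonlocal sets $A$ and $B$, only the crude uniform bound (F2).
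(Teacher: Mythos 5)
Your proof is correct and follows essentially the same route as the paper's: smooth out the cone $L|x-y|$, localize at infinity, and exploit the fact that the bound in (F2) is uniform in $r$ and in the set argument to force $M\le C_L$ and reach a contradiction, with the initial modulus $\omega_0$ ruling out $\hat t=0$. The only (harmless) implementation differences are that you use a quadratic localization $\eta\langle x\rangle^2$ (justified by your a priori bound $\eta|\hat x|=O(1)$) together with the parabolic theorem on sums in a single time variable, whereas the paper uses a linear-growth cutoff $g_R$ and doubles the time variable so that the smooth penalization itself serves directly as the test function.
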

\begin{proof}
Note that \eqref{initial comp} yields the existence of $C_0>0$ such that 
\begin{equation}\label{initial comp2}
u(x, 0)-v(y, 0)\leq C_0(|x-y|+1)\quad \text{for all $x, y\in \R^n$.}
\end{equation}
Take $L>\max\{L_{T}, C_0\}$. Our goal is to show that 
\begin{equation}\label{ineq:first-step}
u(x, t)-v(y, t)-\psi(x, y)-M(1+t)\leq 0\quad \text{for $x, y\in \R^n$, $t\in [0, T)$}
\end{equation}
for $M>0$ sufficiently large, where 
\[
\psi(x, y)=L(|x-y|^2+1)^{1\over 2}.
\]
Suppose that \eqref{ineq:first-step} fails to hold for any arbitrarily large $M>0$. 
Then, we may assume that $M>C_0$, and 
there exist $\hat{x}, \hat{y}\in \R^n, \hat{t}\in [0, T)$ such that 
\begin{equation}\label{rough1}
u(\hat{x}, \hat{t})-v(\hat{y}, \hat{t})-\psi(\hat{x}, \hat{y})-M(1+\hat{t})>0.
\end{equation}
Set, for $\ep, \lambda>0$ small and $R>0$ large, 
\begin{align*}
\Psi_\ep(x, t, y, s)=&\; u(x, t)-v(y, s)-\psi(x, y)-L (g_R(x) + g_R(y)) \\
&\; -{(t-s)^2\over 2\ep}-M(1+t) -\frac{\lambda}{T-t},
\end{align*}
where $g_R\in C^2(\R^n)$ is a nonnegative function satisfying 
\begin{align*}
&g_R(x)=0 \quad \text{for $|x|<R$,} \quad
{g_R(x)\over |x|}\to 1 \quad \text{as $|x|\to \infty$,}\\
&
\sup\{|\nabla g_R(x)|+|\nabla^2 g_R(x)|: x\in \R^n,\  R>0\}<\infty.
\end{align*}
It follows from \eqref{rough1} that $\Psi_\ep(\hat{x}, \hat{t}, \hat{y}, \hat{t})>0$ if we take $R>|\hat{x}|, |\hat{y}|$ and $\lambda > 0$ small depending only on $M$. 

By \eqref{growth comp}, we see that $\Psi_\ep$ attains a positive maximum in $(\R^{2}\times [0, T))^2$ at $(x_\ep, t_\ep, y_\ep, s_\ep)$ for $R>0$ and $M>0$ large and for $\ep>0$ small.  In fact,  $x_\ep$ and $y_\ep$ are bounded uniformly with respect to $\ep$. Moreover, we have $t_\ep, s_\ep\to t_0$ for some $t_0\in [0, T)$ as $\ep\to 0$.
 In view of the upper semicontinuity of $u$ and lower semicontinuity of $v$ as well as \eqref{initial comp2}, we deduce that $t_0\neq 0$ and thus $t_\ep, s_\ep>0$ for all $\ep>0$ small.

Since $u$ and $v$ are, respectively, a subsolution and a supersolution of \eqref{E1}, we obtain
\begin{align*}
&{t_\ep-s_\ep\over \ep}+M+ \frac{\lambda}{(T-t_\ep)^2} + F_\ast(u(x_\ep, t_\ep), p_1, X_1, K\cap \{u(\cdot, t_\ep)<u(x_\ep, t_\ep)\})\leq 0,\\
&{t_\ep-s_\ep\over \ep}+F^\ast(v(y_\ep, s_\ep), p_2, X_2, K\cap \{v(\cdot, t_\ep)\leq v(y_\ep, t_\ep)\})\geq 0,
\end{align*}
where 
\[
p_1=L(|x_\ep-y_\ep|^2+1)^{-{1\over 2}}(x_\ep-y_\ep) + L \nabla g_R(x_\ep), \] \[p_2=L(|x_\ep-y_\ep|^2+1)^{-{1\over 2}}(x_\ep-y_\ep) - L \nabla g_R(y_\ep),
\]
\[
X_1=L(|x_\ep-y_\ep|^2+1)^{-{1\over 2}} I-L(|x_\ep-y_\ep|^2+1)^{-{3\over 2}}(x_\ep-y_\ep)\otimes (x_\ep-y_\ep) +L\nabla^2 g_R(x_\ep),
\]
\[
X_2=-L(|x_\ep-y_\ep|^2+1)^{-{1\over 2}} I+L(|x_\ep-y_\ep|^2+1)^{-{3\over 2}}(x_\ep-y_\ep)\otimes (x_\ep-y_\ep)- L\nabla^2 g_R(y_\ep).
\]
Since the boundedness of $p_1, p_2, X_1, X_2$ depends only on $L$, taking the difference between the viscosity inequalities above and applying (F2), we have $C_L>0$ such that $M\leq C_L$, which is a contradiction to the arbitrariness of $M>0$. 
\end{proof}

Let us now proceed to the proof of Theorem \ref{thm:comp}.
\begin{proof}[Proof of Theorem {\rm\ref{thm:comp}}]
Assume by contradiction that 
$\sup_{\R^n\times[0,T)}(u-v)=:\theta>0$. 
Then, there exists $\lambda>0$ such that 
\[
\sup_{(x,t)\in\R^n\times[0,T)}\left\{u(x,t)-v(x,t)-\frac{\lambda}{T-t}\right\}>\frac{3\theta}{4}. 
\]
There exists $(x_1,t_1)\in\R^n\times[0,T)$ such that 
$u(x_1,t_1)-v(x_1,t_1)-\lambda/(T-t_1)>\theta/2$. 
Noting that $\sup_{\R^n}(u(\cdot,0)-v(\cdot,0))\le0$, we have $t_1>0$. 

Define 
\[
\Phi(x,y,t):=u(x,t)-v(y,t)-\frac{|x-y|^4}{\varepsilon^4}-\alpha(|x|^2+|y|^2)-\frac{\lambda}{T-t}
\]
for $\varepsilon, \alpha>0, \lambda>0$.  It is then clear that there exists $\alpha_0>0$ small such that 
\[
\sup_{(x, y, t)\in \R^{2n}\times [0, T)}\Phi(x, y, t) >{\theta\over 4}
\]
for all $0<\alpha<\alpha_0$ and $\ep>0$ small. 
The growth condition \eqref{growth comp} implies that $\Phi$ attains a maximum at some 
$(x_{\varepsilon, \alpha}, y_{\varepsilon, \alpha}, t_{\varepsilon, \alpha})\in\R^n\times[0,T)$. 
We write $(\tilde{x}, \tilde{y}, \tilde{t})$ for $(x_{\varepsilon, \alpha}, y_{\varepsilon, \alpha}, t_{\varepsilon, \alpha})$ by abuse of notations.  It follows that 
\[
\begin{aligned}
&\frac{|\tilde{x}-\tilde{y}|^4}{\varepsilon^4}+\alpha(|\tilde{x}|^2+|\tilde{y}|^2)\\
&\leq u(\tilde{x}, \tilde{t})-v(\tilde{y}, \tilde{t})- u(x_1, t_1)+v(x_1, t_1)+2\alpha|x_1|^2+{\lambda\over T-t_1}-{\lambda\over T-\tilde{t}}.
\end{aligned}
\]
In view of Proposition \ref{prop growth}, we have 
\[
\begin{aligned}
&\frac{|\tilde{x}-\tilde{y}|^4}{\varepsilon^4}+\alpha(|\tilde{x}|^2+|\tilde{y}|^2) \leq L(|\tilde{x}-\tilde{y}|+1)+M(\tilde{t}+1)\\
&\qquad - u(x_1, t_1)+v(x_1, t_1)+2\alpha|x_1|^2+{\lambda\over T-t_1}-{\lambda\over T-\tilde{t}}.
\end{aligned}
\] 
It follows that 
\[
\frac{|\tilde{x}-\tilde{y}|^4}{\varepsilon^4}-L|\tilde{x}-\tilde{y}|+\alpha(|\tilde{x}|^2+|\tilde{y}|^2)\le C
\] 
for some $C\ge0$ which is independent of $\varepsilon, \alpha$, which implies that 
\begin{equation}\label{conv:alpha}
\alpha(|\tilde{x}|+|\tilde{y}|)\to0 \ \text{as} \ \alpha\to0 \ 
\text{for any} \ \ep>0, 
\quad
\text{and} 
\quad
\sup_{0<\alpha<\alpha_0} |\tilde{x}-\tilde{y}|\to 0\quad \text{as $\ep\to 0$}. 
\end{equation}
Hence, there exists $\ep_0>0$ such that
\[
\omega_0(|\tilde{x}-\tilde{y}|)\leq {\theta/4}
\]
uniformly for all $0<\ep<\ep_0$ and $0<\alpha<\alpha_0$, where $\omega_0$  is the modulus of continuity appearing in \eqref{initial comp}. 

On the other hand, we have
\[
u(\tilde{x}, \tilde{t})-v(\tilde{y}, \tilde{t})\geq \Phi(\tilde{x}, \tilde{y}, \tilde{t})>{\theta\over 4}.
\]
It follows that  $\tilde{t}>0$ for any $0<\alpha<\alpha_0$ and $0<\ep<\ep_0$. In what follows, we fix $0<\ep<\ep_0$.

By the Crandall-Ishii lemma \cite{CIL}, for all $\rho>0$, there exists 
$(h, p+2\alpha \tilde{x}, X)\in \overline{P}^{2,+}u(\tilde{x},\tilde{t})$, 
$(k, p-2\alpha \tilde{y}, Y)\in \overline{P}^{2,-}v(\tilde{y},\tilde{t})$ such that 
\begin{align}
&h-k=\frac{\lambda}{(T-\tilde{t})^2}, \ p=\frac{4|\tilde{x}-\tilde{y}|^2(\tilde{x}-\tilde{y})}{\varepsilon^4}, \nonumber\\ 
&\begin{pmatrix}
X &0 \\
0 & -Y
\end{pmatrix}
\le 
\begin{pmatrix}
Z &-Z \\
-Z & Z
\end{pmatrix}
+
\rho
\begin{pmatrix}
Z &-Z \\
-Z & Z
\end{pmatrix}^2, \nonumber
\end{align}
where we denote by $P^{\pm}u(x,t)$ the semijets of $u$ at $(x,t)$ (see \cite{CIL, GBook} for the definition), 
and 
\[
Z:=\frac{4|\tilde{x}-\tilde{y}|^2}{\varepsilon^4}(I+2(\tilde{x}-\tilde{y})\otimes (\tilde{x}-\tilde{y}))+
2\alpha 
\begin{pmatrix}
I & I\\
I & I
\end{pmatrix}. 
\]

Here, fix $\varepsilon>0$ small enough so that $\tilde{t}>0$. We divide into two cases:  
\begin{align*}
&\text{Case 1}. \ \liminf_{\alpha\to 0} |\tilde{x}-\tilde{y}|>0,  \\
&\text{Case 2}.  \ \liminf_{\alpha\to 0} |\tilde{x}-\tilde{y}|=0, \ 
\text{i.e.,} \ \exists \alpha_i\to0 \ \text{such that} \ 
\lim_{\alpha_i\to0}|\tilde{x}-\tilde{y}|=0. 
\end{align*}

Let us consider Case 1 first. Since $\Phi(x,x,\tilde{t})\le \Phi(\tilde{x},\tilde{y},\tilde{t})$ for all $x\in\R^n$, we have 
\begin{equation}\label{ineq:key}
u(x,\tilde{t})-u(\tilde{x},\tilde{t})\le v(x,\tilde{t})-v(\tilde{y},\tilde{t})-\frac{|\tilde{x}-\tilde{y}|^4}{\varepsilon^4}+2\alpha|x|^2-\alpha(|\tilde{x}|^2+|\tilde{y}|^2). 
\end{equation}

Since $K$ is bounded, we have 
\[
\begin{aligned}
&\liminf_{\alpha\to 0}\left(-\frac{|\tilde{x}-\tilde{y}|^4}{\varepsilon^4}+2\alpha \max_{x\in K} |x|^2-\alpha(|\tilde{x}|^2+|\tilde{y}|^2)\right)\\
\leq &\liminf_{\alpha\to 0}\left(-\frac{|\tilde{x}-\tilde{y}|^4}{\varepsilon^4}+2\alpha \max_{x\in K} |x|^2\right)<0,
\end{aligned}
\]
which implies that, for all $x\in K$ and $\alpha>0$ small,
\[
-\frac{|\tilde{x}-\tilde{y}|^4}{\varepsilon^4}+2\alpha |x|^2-\alpha(|\tilde{x}|^2+|\tilde{y}|^2)<0.
\]
By \eqref{ineq:key},  we thus have $u(x,\tilde{t})-u(\tilde{x},\tilde{t})<v(x,\tilde{t})-v(\tilde{y},\tilde{t})$ for all $x\in K$, which implies 
\begin{equation}\label{set implication}
K\cap\{v(\cdot,\tilde{t})\leq v(\tilde{y},\tilde{t})\}\subset K\cap\{u(\cdot,\tilde{t})<u(\tilde{x},\tilde{t})\}. 
\end{equation}

Moreover, noticing that $|p|$ is bounded away from $0$ uniformly in  $\alpha$, 
we have 
\begin{equation}\label{gradients}
p_1:=p+2\alpha\tilde{y}\neq 0, \quad p_2:=p-2\alpha\tilde{x}\neq 0 
\quad\text{for all $\alpha>0$ small}. 
\end{equation}
Since $u$ and $v$, respectively,  are a viscosity subsolution and supersolution to \eqref{E1}, we get
\begin{align*}
&h+F(u(\tilde{x}, \tilde{t}), p_1,  X, K\cap\{u(\cdot,\tilde{t})<u(\tilde{x},\tilde{t})\})\leq 0, \\
&k+F(v(\tilde{y}, \tilde{t}), p_2, Y, K\cap\{v(\cdot,\tilde{t})\leq v(\tilde{y},\tilde{t})\})\geq 0. 
 \end{align*}
 Since \eqref{set implication} and $v(\tilde{y}, \tilde{t})<u(\tilde{x}, \tilde{t})$ hold, applying (F1) and (F4) to the second inequality above yields 
\[
k+F(u(\tilde{x}, \tilde{t}), p_2, Y, K\cap\{u(\cdot,\tilde{t})< u(\tilde{x},\tilde{t})\})\geq 0.   
\]
Using (F5), we then obtain 
\[
\begin{aligned}
\frac{\lambda}{T^2} & \le 
\frac{\lambda}{(T-\tilde{t})^2} =h-k\\
&\le   F(u(\tilde{x}, \tilde{t}), p_2, Y, K\cap\{u(\cdot,\tilde{t})< u(\tilde{x},\tilde{t})\})-
F(u(\tilde{x}, \tilde{t}), p_1, X, K\cap\{u(\cdot,\tilde{t})<u(\tilde{x},\tilde{t})\})\\
&\le  \omega\left(\frac{|Z||p_1-p_2|}{\min\{|p_1|, |p_2|\}}+|p_1-p_2|+\alpha\right)+O(\rho)\\
&\le  \omega\left(\frac{2\alpha|Z|(|\tilde{x}|+|\tilde{y}|)}{\min\{|p_1|, |p_2|\}}+\alpha(2|\tilde{x}|+2|\tilde{y}|+1)\right)+O(\rho). 
\end{aligned}
\]
Note that 
\[
|Z|\le c, \quad 
\min\{|p_1|, |p_2|\}\ge \frac{1}{c}
\]
for some $c>0$ which is independent of $\alpha$. Sending $\rho\to0$, $\alpha\to0$ in this order and using \eqref{conv:alpha} yield 
$\lambda/T^2\le 0$, which is a contradiction.

Let us turn to Case 2. In this case, fixing $\rho>0$ small, we have 
\[
|p|\to0, \ X\to X^\varepsilon, \ Y\to Y^\varepsilon \quad\text{as} \ \alpha_i\to0, 
\]
and 
\[
\begin{pmatrix}
X^\varepsilon & 0\\
0 & - Y^\varepsilon 
\end{pmatrix}
\le 0, 
\]
which implies $X^\varepsilon \le 0\le Y^\varepsilon$. 
Let us consider the subsequence along $\alpha_i$. Let $p_1, p_2$ be as in \eqref{gradients} above. 
Then we can adopt the definition of subsolutions and supersolutions to get
\[
0\ge   
h+F_\ast(u(\tilde{x}, \tilde{t}), p_1, X, K\cap\{u(\cdot,\tilde{t})<u(\tilde{x},\tilde{t})\})
\ge 
h+F_\ast(u(\tilde{x}, \tilde{t}), p_1, X, \emptyset), 
\]
\[
0\le k+F^\ast(v(\tilde{y}, \tilde{t}), p_2, Y, K\cap\{v(\cdot,\tilde{t})<v(\tilde{y},\tilde{t})\})
\le  k+F^\ast(v(\tilde{y}, \tilde{t}), p_2, Y, K). 
\]
Thus,  we are led to
\[
\frac{\lambda}{T^2}\le 
\frac{\lambda}{(T-\tilde{t})^2}=h-k
\le 
F^\ast(v(\tilde{y}, \tilde{t}), p_2, Y,  K)-F_\ast(u(\tilde{x}, \tilde{t}), p_1, X, \emptyset).
\]
Sending $\alpha\to0$ and applying (F1), we get 
\[
\begin{aligned}
\frac{\lambda}{T^2}&\le \limsup_{\alpha_i\to 0}\left(F^\ast(v(\tilde{y}, \tilde{t}), 0, Y_\ep, K)-F_\ast(u(\tilde{x}, \tilde{t}), 0, X_\ep, \emptyset)\right)\\
&\le \limsup_{\alpha_i\to 0}\left(F^\ast(u(\tilde{x}, \tilde{t}), 0, 0,  K)-F_\ast(u(\tilde{x}, \tilde{t}), 0, 0, \emptyset)\right)
\end{aligned}
\]
Using (F6) yields $\lambda/T^2\le 0$, which is a contradiction.  
\end{proof}


By using the stability result \cite[(P2)]{Sl}, we can adapt the standard ``bump-up" argument in Perron's method 
(see \cite{CIL, GBook} for instance) to \eqref{E1}. 
Assuming that $u_0$ is uniformly continuous, we can prove the existence of the unique viscosity solution. 
In addition, if there exists an appropriate subsolution as below,  then by the comparison principle, we obtain the existence of a unique viscosity solution that satisfies the positivity and coercivity conditions \eqref{ineq:positive} and \eqref{coercive}.
\begin{enumerate}
\item[(I)] There exists a function $\phi\in C(\mathbb{R}^n \times [0,\infty))$ such that
\begin{itemize}
\item[(i)] $\phi(\cdot, t) \in \UC (\mathbb{R}^n)$ for any $t \ge 0$, 
\item[(ii)] $u_0 \ge \phi(\cdot , 0)$ in $\mathbb{R}^n$, 
\item[(iii)] $\phi \ge c_0$ in $\mathbb{R}^n\times [0, \infty)$ for some $c_0>0$.
\item[(iv)] $\phi$ is coercive in space, that is, 
\[
\inf_{|x|\geq R,\ t \le T}\phi(x,t) \to \infty\quad \text{as $R\to \infty$ for any $T \ge 0$}.
\]
\item[(v)] $\phi$ is a viscosity subsolution of \eqref{E1}.
\end{itemize}
\end{enumerate}

\begin{thm}[Existence]\label{thm:exist}
Assume that {\rm(F1)--(F6)} hold. Let $u_0\in\UC(\R^n)$. Then there exists a unique solution $u$ of \eqref{E1}, \eqref{initial} that satisfies \eqref{uc-preserving}. 
Moreover, if the additional assumption {\rm(I)} holds, then the unique solution $u$ also satisfies \eqref{ineq:positive} and \eqref{coercive}. 
\end{thm}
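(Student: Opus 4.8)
The plan is to build the solution by Perron's method, to derive uniqueness and the modulus estimate \eqref{uc-preserving} from comparison-type arguments resting on Theorem \ref{thm:comp}, and, under {\rm(I)}, to recover \eqref{ineq:positive} and \eqref{coercive} by comparing with the subsolution $\phi$. I begin with barriers. Since $u_0\in\UC(\R^n)$ has at most linear growth, and since the uniform-in-$r$ local boundedness {\rm(F2)} forces $F^\ast$ and $F_\ast$ — hence $\mu$ — to be bounded uniformly in $r$ for bounded $(p,X)$, one checks that, for suitable large constants $A,B,C$, the smooth functions $u^{\pm}(x,t):=\pm\big(A(|x|^2+1)^{1/2}+Ct\big)\pm B$ are respectively a supersolution and a subsolution of \eqref{E1} with $u^-(\cdot,0)\le u_0\le u^+(\cdot,0)$; the only point requiring care is $x=0$, where the spatial gradient vanishes and one invokes {\rm(F6)} together with $F^\ast(r,0,X,A)\le\mu(r)\le F_\ast(r,0,X',A)$ for $X\ge0\ge X'$. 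Any function trapped between $u^-$ and $u^+$ then obeys the growth condition \eqref{growth comp}, so Theorem \ref{thm:comp} is applicable to it.

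To pin down the initial layer, fix a mollified concave modulus of continuity $\widetilde\omega_0\ge\omega_0$, where $\omega_0$ is the modulus of $u_0$, and for $z\in\R^n$, $\eta>0$ set $\psi_{z,\eta}^{\pm}(x,t):=u_0(z)\pm\big(\widetilde\omega_0\big((|x-z|^2+\eta^2)^{1/2}\big)+Ct\big)$; by the argument just used (again with {\rm(F2)} and {\rm(F6)} at $x=z$, where the gradient vanishes), these are super/subsolutions for $C$ large independent of $z,\eta$, with $\psi_{z,\eta}^{+}(\cdot,0)\ge u_0\ge\psi_{z,\eta}^{-}(\cdot,0)$ and $\inf_{z,\eta}\psi_{z,\eta}^{+}(\cdot,0)=u_0=\sup_{z,\eta}\psi_{z,\eta}^{-}(\cdot,0)$. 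Hence $\overline u:=\min\{u^+,\inf_{z,\eta}\psi_{z,\eta}^{+}\}$ is a supersolution and $\underline u:=\max\{u^-,\sup_{z,\eta}\psi_{z,\eta}^{-}\}$ a subsolution (taking lower, resp. upper, semicontinuous envelopes), both of linear growth and both equal to $u_0$ at $t=0$. Now let $u$ be the supremum of all subsolutions $w$ of \eqref{E1} with $\underline u\le w\le\overline u$. The nonlocal stability property \cite[(P2)]{Sl}, Lemma \ref{lem meas}, and the refined test-function criterion of Proposition \ref{prop:equiv} let the usual ``bump-up'' step of Perron's method (see \cite{CIL,GBook}) go through, so that $u^\ast$ is a subsolution, $u_\ast$ a supersolution, both of linear growth and with $u_\ast(\cdot,0)\ge u_0\ge u^\ast(\cdot,0)$; Theorem \ref{thm:comp} then gives $u^\ast\le u_\ast$, whence $u=u^\ast=u_\ast\in C(\R^n\times[0,\infty))$ solves \eqref{E1}, \eqref{initial}. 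Uniqueness follows from Theorem \ref{thm:comp}, since a solution satisfying \eqref{uc-preserving} necessarily obeys \eqref{growth comp} (its value at the origin is bounded on compact time intervals by continuity, and $\omega_0$ grows at most linearly).

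There remains \eqref{uc-preserving} for $u$, which I expect to be the main obstacle. The customary shortcut is unavailable here: for $h\neq0$ the translate $u(\cdot+h,\cdot)$ does \emph{not} solve \eqref{E1}, because the fixed compact set $K$ in the nonlocal term is not transported by the translation, so $u(\cdot+h,\cdot)-\omega_0(|h|)$ need not be a supersolution and one cannot simply shift and compare. Instead I would prove \eqref{uc-preserving} by a doubling-of-variables argument in the spirit of the proof of Theorem \ref{thm:comp}: supposing $\sup_{x,y,\,0\le t<T}\{u(x,t)-u(y,t)-\omega_0(|x-y|)\}>0$, one maximizes the functional obtained from $u(x,t)-u(y,t)$ by subtracting $\widetilde\omega_0(|x-y|)$ together with the auxiliary terms $\alpha(|x|^2+|y|^2)$ and $\lambda/(T-t)$ from that proof; at an interior maximizer $(\hat x,\hat y,\hat t)$ (interior because $u_0$ has modulus $\omega_0$, so the functional is nonpositive at $t=0$) one has $u(\hat x,\hat t)>u(\hat y,\hat t)$, hence the inclusion $K\cap\{u(\cdot,\hat t)\le u(\hat y,\hat t)\}\subset K\cap\{u(\cdot,\hat t)<u(\hat x,\hat t)\}$; and the Crandall--Ishii lemma together with {\rm(F1)}, {\rm(F4)}, {\rm(F5)} — and {\rm(F6)} for the degenerate case $\hat x\to\hat y$ — applied with the same order of limits as in Theorem \ref{thm:comp} produces the contradiction $\lambda/T^2\le0$. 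Letting the mollification of $\widetilde\omega_0$ shrink to $\omega_0$ and interchanging $x$ and $y$ yields \eqref{uc-preserving}. The point to stress is that the fixed set $K$ destroys spatial translation invariance, so the modulus of continuity must be propagated directly through the comparison machinery, with the modulus itself playing the role of the penalty and with {\rm(F6)} handling the singular set $\{\nabla\varphi=0\}$.

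Finally, assume {\rm(I)} holds. Then $\phi$ is a subsolution with $\phi(\cdot,0)\le u_0=u(\cdot,0)$, and since $\phi\ge c_0>0$ and $\phi$ is coercive it obeys \eqref{growth comp}; applying Theorem \ref{thm:comp} with $\phi$ as subsolution and $u$ as supersolution gives $\phi\le u$ on $\R^n\times[0,\infty)$. Consequently $u\ge\phi\ge c_0$, which is \eqref{ineq:positive}, and $\inf_{|x|\ge R,\ t\le T}u\ge\inf_{|x|\ge R,\ t\le T}\phi\to\infty$ as $R\to\infty$ for every $T\ge0$, which is \eqref{coercive}.
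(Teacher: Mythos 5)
Your proposal is correct and follows essentially the same route as the paper, which itself only sketches this proof in two sentences (Perron's method adapted via the stability result \cite[(P2)]{Sl}, barriers from the uniform continuity of $u_0$, uniqueness from Theorem \ref{thm:comp}, and comparison with the subsolution $\phi$ from (I) for positivity and coercivity). Your elaboration — including the observation that the fixed set $K$ breaks translation invariance so that \eqref{uc-preserving} must be propagated by a doubling-of-variables argument rather than by shifting — is a faithful and more detailed account of exactly that strategy.
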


\section{Quasiconvexity preserving}\label{sec:quasiconvex}
This section is devoted to proving our main result, Theorem \ref{thm:main}. 
 Fix arbitrarily $\lambda\in (0, 1)$. For $u\in C(\R^n\times [0, \infty))$, let $u_{\star, \lambda}$ be given by \eqref{eq:envelop2-lam}. 
Our goal is to show that 
\begin{equation}\label{main-goal2}
u_{\star, \lambda}(x, t)=u(x, t)\quad \text{for all } (x,t)\in\R^n\times[0, \infty). 
\end{equation}
Theorem \ref{thm:main} follows immediately, since $u$ is quasiconvex in space if and only if \eqref{main-goal2} holds for all $\lambda\in (0, 1)$. 
By the definition of $u_{\star, \lambda}$, it is clear that $u_{\star, \lambda}\leq u$ in $\R^n\times [0, \infty)$. It thus suffices to prove the reverse inequality. To this end, we first approximate $u_{\star, \lambda}$ 
 via the power convex envelope function $u_{q,\lambda}$ ($q>1$) given by 
\begin{equation}\label{eq:q-envelop}
 u_{q, \lambda}(x, t)=\inf\left\{\left(\lambda u(y, t)^q+(1-\lambda) u(z, t)^q\right)^{{1\over q}}: \lambda y+(1-\lambda)z=x\right\}
\end{equation}
for all $(x, t)\in \R^n\times [0, \infty)$.

\begin{prop}[Approximation by power convex envelope]\label{prop:converge}
Let $u\in C(\R^n\times [0, \infty))$ satisfy \eqref{ineq:positive} for some $c_0>0$. Fix $\lambda \in (0, 1)$. Let $u_{\star, \lambda}$ and $u_{q, \lambda}$ be given respectively by \eqref{eq:envelop2-lam} and \eqref{eq:q-envelop}.
Then 
$u_{q, \lambda}\to u_{\star, \lambda}$ locally uniformly in $\R^n\times [0, \infty)$ as $q\to \infty$.
\end{prop}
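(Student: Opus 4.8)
The plan is to sandwich $u_{q,\lambda}$ between $u_{\star,\lambda}$ and a scalar multiple of it, the scalar tending to $1$ as $q\to\infty$. The crux is the elementary two-sided inequality: for any $a,b>0$ and $q>1$, writing $c_\lambda:=\min\{\lambda,1-\lambda\}\in(0,1/2]$, one has
\[
c_\lambda^{1/q}\,\max\{a,b\}\;\le\;\bigl(\lambda a^q+(1-\lambda)b^q\bigr)^{1/q}\;\le\;\max\{a,b\}.
\]
The right inequality holds since $\lambda a^q+(1-\lambda)b^q\le\max\{a,b\}^q$, and the left one since $\lambda a^q+(1-\lambda)b^q\ge c_\lambda\max\{a,b\}^q$, because whichever of $a,b$ realizes the maximum carries a weight at least $c_\lambda$.

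Next I would apply this pointwise to $a=u(y,t)$, $b=u(z,t)$ for each admissible pair with $\lambda y+(1-\lambda)z=x$ — legitimate because $u\ge c_0>0$ by \eqref{ineq:positive} — and then take the infimum over all such pairs in the definitions \eqref{eq:envelop2-lam} and \eqref{eq:q-envelop}. Since a constant factor passes through the infimum and $\inf f\le\inf g$ whenever $f\le g$ pointwise, this yields, for every $(x,t)\in\R^n\times[0,\infty)$,
\[
c_\lambda^{1/q}\,u_{\star,\lambda}(x,t)\;\le\;u_{q,\lambda}(x,t)\;\le\;u_{\star,\lambda}(x,t).
\]
Both infima are finite and positive (bounded below by $c_\lambda^{1/q}c_0$ and $c_0$, respectively), so there is no issue of well-definedness.

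Finally, to upgrade this to locally uniform convergence I would use that $u_{\star,\lambda}$ is locally bounded: taking $y=z=x$ in \eqref{eq:envelop2-lam} gives $c_0\le u_{\star,\lambda}(x,t)\le u(x,t)$, hence on any compact $Q\subset\R^n\times[0,\infty)$ one has $u_{\star,\lambda}\le\max_Q u=:C_Q<\infty$. Then on $Q$,
\[
0\le u_{\star,\lambda}(x,t)-u_{q,\lambda}(x,t)\le\bigl(1-c_\lambda^{1/q}\bigr)u_{\star,\lambda}(x,t)\le\bigl(1-c_\lambda^{1/q}\bigr)C_Q,
\]
and since $c_\lambda^{1/q}\to1$ as $q\to\infty$, the right-hand side tends to $0$ uniformly on $Q$, which is the claim. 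As for difficulties, there is essentially no serious obstacle: the proposition rests entirely on the one-line inequality above together with the a priori positivity and continuity of $u$; the only point needing any care is that the error factor $1-c_\lambda^{1/q}$ multiplies something bounded on compacts, i.e. the local boundedness of $u_{\star,\lambda}$, which is immediate from $u_{\star,\lambda}\le u$.
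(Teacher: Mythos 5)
Your proof is correct, and it takes a genuinely different (and more elementary) route than the paper's. The paper works additively: it picks near-minimizers $y_j,z_j$ for $u_{q,\lambda}(x,t)$, bounds $\max\{u(y_j,t),u(z_j,t)\}$ from above using the positivity $u\ge c_0$, and then estimates the gap $\max\{a,b\}-(\lambda a^q+(1-\lambda)b^q)^{1/q}$ via the concavity of $s\mapsto(\lambda s+1-\lambda)^{1/q}$, arriving at $0\le u_{\star,\lambda}-u_{q,\lambda}\le \tfrac{C}{q}(u_{\star,\lambda}+1)$. You instead use the multiplicative sandwich $c_\lambda^{1/q}\max\{a,b\}\le(\lambda a^q+(1-\lambda)b^q)^{1/q}\le\max\{a,b\}$, which passes through the infima directly and gives $c_\lambda^{1/q}u_{\star,\lambda}\le u_{q,\lambda}\le u_{\star,\lambda}$ with no need for near-minimizers or the concavity computation; locally uniform convergence then follows from $u_{\star,\lambda}\le u$ and continuity of $u$, exactly as you say. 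Since $1-c_\lambda^{1/q}=O(1/q)$, your argument even recovers the same $O(1/q)$ rate on compacta. One minor remark: the uniform lower bound $c_0$ is not really what your argument needs (the sandwich holds for any $a,b\ge 0$); what matters is only that $u>0$ so the powers are well defined and that $u_{\star,\lambda}\le u$ gives local boundedness, whereas the paper's additive estimate genuinely uses $c_0$ in its normalization step. Both approaches are valid; yours is shorter and self-contained.
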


\begin{proof}
By definition, it is clear that $u_{q, \lambda}\leq u_{\star, \lambda}$ in $\R^n\times [0, \infty)$. Let $(x,t)\in\R^n\times [0, \infty)$. 
For any $j\in\mathbb{N}$, there exist $y_j, z_j\in\R^n$ such that 
\[
x=\lambda y_j+(1-\lambda)z_j, \quad
\big(\lambda u(y_j,t)^q+(1-\lambda)u(z_j,t)^q\big)^{1\over q}\le u_{q,\lambda}(x,t)+\frac{1}{j}. 
\]

It follows from the positiveness of $u$ that 
\begin{equation}\label{eq:upper bdd}
\begin{aligned}
 \max\left\{u(y_j, t), u(z_j, t)\right\}
 &\leq \left({1\over \min\{\lambda, 1-\lambda\}}\right)^{1\over q} (u_{q, \lambda} (x, t)+1)\\
 &\leq \left({1\over \min\{\lambda, 1-\lambda\}}\right)^{1\over q} (u_{\star, \lambda} (x, t)+1)
 \end{aligned}
\end{equation}
Moreover, we have 
\begin{equation*}
u_{\star, \lambda}(x, t)-u_{q, \lambda}(x, t)\leq \max\left\{u(y_j, t), u(z_j, t)\right\}- \left(\lambda u(y_j, t)^q+(1-\lambda) u(z_j, t)^q\right)^{{1\over q}}+\frac{1}{j}.
\end{equation*}
This yields
\begin{equation}\label{attain-q app1}
\begin{aligned}
&u_{\star, \lambda}(x, t)-u_{q, \lambda}(x, t)\\
&\leq {1\over q}\max\left\{u(y_j, t), u(z_j, t)\right\} \max\{{\lambda}^{{1\over q}-1} (1-\lambda), (1-\lambda)^{{1\over q}-1} \lambda\}(1-r^q)+{1\over j},
\end{aligned}
\end{equation}
where 
\begin{equation*}
r=\frac{\min\left\{u(y_j, t), u(z_j, t)\right\}}{\max\left\{u(y_j, t), u(z_j, t)\right\}}\in (0, 1].
\end{equation*}
Indeed, for any $0<a\leq b$, we have 
\begin{equation*}
b-(\lambda a^q+(1-\lambda) b^q)^{1\over q} \le b (g(1)-g(r^q)),
\end{equation*}
where $g(s)=(\lambda s +1-\lambda )^{1\over q}$ and $r=a/b\in (0, 1]$. Since $g$ is concave, we get
\begin{equation*}
g(1)-g(r^q)\leq g'(r^q)(1-r^q) ={1 \over q}\lambda (\lambda r^q+1-\lambda)^{{1\over q}-1}(1-r^q)\leq {1\over q}\lambda(1-\lambda)^{{1\over q}-1}(1-r^q).
\end{equation*}
This implies that 
\begin{equation*}
b-(\lambda a^q+(1-\lambda) b^q)^{1\over q}\leq {1\over q} b\lambda(1-\lambda)^{{1\over q}-1}(1-r^q).
\end{equation*}
If $a\geq b>0$, then we can similarly obtain 
\begin{equation*}
a-(\lambda a^q+(1-\lambda) b^q)^{1\over q}\leq {1\over q} a(1-\lambda)\lambda^{{1\over q}-1}(1-r^q)
\end{equation*}
with $r=b/a$. In general, for any $a, b>0$, we have 
\begin{equation*}
\max\{a, b\}-(\lambda a^q+(1-\lambda)b^q)^{1\over q}\leq {1\over q}\max\{a, b\}\max\{{\lambda}^{{1\over q}-1} (1-\lambda), (1-\lambda)^{{1\over q}-1} \lambda\}(1-r^q)
\end{equation*}
with $r=\min\{a, b\}/\max\{a, b\}$, which leads us to \eqref{attain-q app1}.

The relation \eqref{attain-q app1}, together with \eqref{eq:upper bdd}, immediately yields
\begin{equation*}
0\leq u_{\star, \lambda}(x, t)-u_{q, \lambda}(x, t)\leq {C\over q}\left(u_{\star, \lambda} (x, t)+1\right)+\frac{1}{j}
\end{equation*}
for some $C>0$ independent of $(x, t)$ and $j\geq 1$. Letting $j\to \infty$, we thus deduce the uniform convergence of $u_{q, \lambda}$ to $u_{\star, \lambda}$ in any compact subset of $\R^n\times[0,\infty)$ as $q\to \infty$. 
\end{proof}

\begin{rem}\label{rmk minimizer}
Under the coercivity condition on $u$, the infimum in \eqref{eq:q-envelop} can be obtained and the points $y_j, z_j$ in the proof above can be taken as minimizers. The convergence result certainly still holds.
\end{rem}

We show a key ingredient to prove \eqref{main-goal2}, which stems from the idea in \cite{ALL} to prove convexity of solutions to fully nonlinear equations by using its convex envelope. Such an idea is later developed in \cite{ILS} to show a power-type convexity or concavity with a finite exponent. We here makes a further step, studying the limit case as the exponent tends to $\infty$.

\begin{lem}\label{lem:super-general}
Assume that {\rm(F1)--(F7)} hold. Let $u\in C(\R^n\times [0, \infty))$ be a  supersolution of \eqref{E1} satisfying \eqref{ineq:positive} and \eqref{coercive}. Let $\lambda\in (0, 1)$ and $u_{\star, \lambda}$ be the function defined by \eqref{eq:envelop2-lam}. Then $u_{\star, \lambda}$ is  a supersolution of \eqref{E1}. 
\end{lem}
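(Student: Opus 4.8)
\textbf{Proof strategy for Lemma \ref{lem:super-general}.}
The plan is to exploit the fact, established in Proposition \ref{prop:converge}, that $u_{\star,\lambda}$ is the locally uniform limit of the power convex envelopes $u_{q,\lambda}$ as $q\to\infty$, together with the stability of viscosity supersolutions under such limits. So the bulk of the work is to show that each $u_{q,\lambda}$ (for $q>1$ large) is a supersolution of \eqref{E1}, and the passage $q\to\infty$ is then a soft argument using Proposition \ref{prop:converge} and the closedness of the class of supersolutions under local uniform convergence (noting (F3) guarantees the needed continuity of $F$ in its set variable, and Proposition \ref{prop:equiv} lets us restrict attention to test functions with non-vanishing gradient or with vanishing gradient and Hessian). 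First I would fix $q>1$ close to $1$ in the sense of (F7), set $\beta=1-1/q$, and introduce the substitution $v=u^{1-\beta}=u^{1/q}$; then $u_{q,\lambda}$ corresponds, via this power change of variables, to the \emph{ordinary} (arithmetic-mean, i.e. $\lambda,1-\lambda$-weighted) convex envelope of $v^q=u$ in a transformed sense — more precisely $u_{q,\lambda}^{1-\beta}$ is, at fixed $t$, the infimal convolution $\inf\{\lambda v(y,t)+(1-\lambda)v(z,t):\lambda y+(1-\lambda)z=x\}$ is \emph{not} quite right, so instead one works directly with the $\ell^q$-type envelope and uses the operator $G_\beta$ of \eqref{transformed op}, which is precisely the operator satisfied by $v$ when $u$ solves \eqref{E1}.

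The core step is a touching/doubling argument at an interior minimum of $u_{q,\lambda}-\varphi$. Suppose $u_{q,\lambda}-\varphi$ has a local minimum at $(x_0,t_0)$ with, say, $\nabla\varphi(x_0,t_0)\ne 0$ (the degenerate case $\nabla\varphi=0$, $\nabla^2\varphi=0$ is handled separately using (F6), (F7) concavity condition \eqref{concave2} on $r\mapsto r^\beta\mu(r^{1-\beta})$, and Proposition \ref{prop:equiv}). By the coercivity \eqref{coercive} and Remark \ref{rmk minimizer} the infimum defining $u_{q,\lambda}(x_0,t_0)$ is attained at some pair $(y_0,z_0)$ with $x_0=\lambda y_0+(1-\lambda)z_0$. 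The standard argument (as in \cite{ALL,ILS}) is to build test functions for $u$ at $(y_0,t_0)$ and at $(z_0,t_0)$ from $\varphi$: one shows that $(y,t)\mapsto$ (an appropriate $q$-power combination involving $\varphi$) touches $u$ from below at $(y_0,t_0)$, and similarly at $(z_0,t_0)$, so the supersolution inequality for $u$ holds at both points. One then combines the two inequalities. This is where the concavity hypothesis (F7) on $(r,X)\mapsto G_\beta(r,p,X,A)$ enters decisively: after the power change of variables $r=u^{1-\beta}$, the combination of the two supersolution inequalities, weighted by $\lambda$ and $1-\lambda$, produces the supersolution inequality for $u_{q,\lambda}$ at $(x_0,t_0)$ precisely because $G_\beta$ is concave in the $(r,X)$ pair; the gradient terms match automatically because $\nabla\varphi$ is common, and the geometry of the convex combination $x_0=\lambda y_0+(1-\lambda)z_0$ makes the first-order parts consistent.

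The one genuinely nonlocal complication — and I expect this to be the main obstacle — is controlling the set argument $K\cap\{u(\cdot,t_0)\le u(x_0,t_0)\}$ (or rather its analogue for $u_{q,\lambda}$) when we pass from $u_{q,\lambda}$ to the two touching points $y_0,z_0$. One needs to relate $\{u_{q,\lambda}(\cdot,t_0)\le u_{q,\lambda}(x_0,t_0)\}$ to the sublevel sets of $u$ at height governed by $u(y_0,t_0)$ and $u(z_0,t_0)$. Since $u_{q,\lambda}\le u$ and, at the minimizing pair, $\max\{u(y_0,t_0),u(z_0,t_0)\}$ is comparable to $u_{q,\lambda}(x_0,t_0)$ up to a factor tending to $1$ as $q\to\infty$, one exploits the monotonicity (F4) together with (F3)'s modulus \eqref{usc-set}: the discrepancy between the relevant sublevel sets has Lebesgue measure controlled by $\omega_R$ of a quantity that vanishes in the limit, or one uses an inclusion that holds exactly (as in the comparison principle proof, e.g.\ \eqref{set implication}) after a small perturbation argument. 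I would handle this by inserting the common set $B=K\cap\{u(\cdot,t_0)\le u(x_0,t_0)\}$ and estimating the difference via (F3) and (F4), exactly in the spirit of the computation in the proof of Proposition \ref{prop:equiv}; the coercivity \eqref{coercive} again ensures that all the relevant minimizers stay in a fixed compact set so the moduli of continuity can be applied uniformly. Finally, the $q\to\infty$ limit is taken using Proposition \ref{prop:converge}: the supersolution property of $u_{q,\lambda}$ passes to $u_{\star,\lambda}$ by the standard stability of viscosity supersolutions under locally uniform convergence, combined with the continuity of $F$ and the set-continuity \eqref{usc-set} in (F3), which guarantees the nonlocal terms converge appropriately.
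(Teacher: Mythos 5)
Your toolbox is the right one (the $q$-power envelope, the transformed operator $G_\beta$ with $\beta=1-1/q$, the concavity in (F7), a doubling/Crandall--Ishii argument at the minimizing pair, the measure control of the set discrepancy via (F3)--(F4), and Proposition \ref{prop:converge}), but the architecture you propose has a genuine gap: the intermediate claim that each $u_{q,\lambda}$ is a supersolution of \eqref{E1} for fixed large $q$ is false in general. The obstruction is exactly the nonlocal term you flag: at a minimizing pair $(y_0,z_0)$ the supersolution inequalities for $u$ carry the sets $K\cap\{u(\cdot,t_0)\le u(y_0,t_0)\}$ and $K\cap\{u(\cdot,t_0)\le u(z_0,t_0)\}$, and since $\max\{u(y_0,t_0),u(z_0,t_0)\}$ exceeds $u_{q,\lambda}(x_0,t_0)$ by an amount of order $1/q$ that does \emph{not} vanish for fixed $q$, these sets need not be contained in (nor have small measure excess over) $K\cap\{u_{q,\lambda}(\cdot,t_0)\le u_{q,\lambda}(x_0,t_0)\}$; the monotonicity (F4) then pushes the inequality in the wrong direction by an error that is not controllable for fixed $q$. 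Indeed, if your intermediate claim were true, the comparison principle would give preservation of $q$-convexity, which Example \ref{ex:1} shows fails for every finite $q\ge1$ for an equation satisfying all of (F1)--(F7). A secondary issue: the ``common set'' you propose to insert, $B=K\cap\{u(\cdot,t_0)\le u(x_0,t_0)\}$, is the sublevel set of $u$, whereas the supersolution property of $u_{\star,\lambda}$ requires the sublevel set of $u_{\star,\lambda}$ itself, namely $K\cap\{u_{\star,\lambda}(\cdot,t_0)\le u_{\star,\lambda}(x_0,t_0)\}$. (Also, you want $q$ large, i.e.\ $\beta=1-1/q$ close to $1$, not ``$q$ close to $1$.'')

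The repair, which is how the paper argues, is to interleave the two limits rather than separate them: fix a test function $\varphi$ touching $u_{\star,\lambda}$ from below at $(x_0,t_0)$, use Proposition \ref{prop:converge} to produce nearby minima $(x_q,t_q)\to(x_0,t_0)$ of $u_{q,\lambda}-\varphi$, run the doubling argument with $v=u^q$, $\psi=\varphi^q$ at the minimizers $(y_q,z_q)$, and in the resulting combined inequality replace every set by the \emph{fixed} target set $W_\star[x_0,t_0]=K\cap\{u_{\star,\lambda}(\cdot,t_0)\le u_{\star,\lambda}(x_0,t_0)\}$. The errors $\omega_R\bigl(m(U[y_q,t_q]\setminus W_\star[x_0,t_0])\bigr)$ and its analogue for $z_q$ then vanish as $q\to\infty$ by Lemma \ref{lem meas}, because $\limsup_q u(y_q,t_q)\le u_{\star,\lambda}(x_0,t_0)$ and $u_{\star,\lambda}(\cdot,t_0)\le\liminf_{\ast}u(\cdot,t_q)$. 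So no supersolution property of $u_{q,\lambda}$ for finite $q$ is ever asserted; only the limit inequality at $(x_0,t_0)$ survives. With that restructuring (plus the separate treatment of the degenerate case via \eqref{concave2} and Proposition \ref{prop:equiv}, which you correctly anticipate), your argument matches the paper's.
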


\begin{proof}
For simplicity of notation, we write 
$w_\star=u_{\star, \lambda}$ and $w_q=u_{q, \lambda}$.  Let us first show that $w_\star \in {\rm LSC}(\mathbb{R}^n \times [0, \infty))$. For an arbitrary $(x_0, t_0) \in \mathbb{R}^n \times [0,\infty)$, let $(x_j, t_j)$ be a sequence satisfying 
\[ (x_j, t_j) \to (x_0, t_0), \quad w_\star (x_j, t_j) \to \liminf_{(x, t) \to (x_0, t_0)} w_\star (x, t) \quad \text{as} \; \; j \to \infty. \]
Due to the coercivity \eqref{coercive}, there exist $y_j, z_j \in \mathbb{R}^n$ such that 
\begin{equation}\label{semi-continuity-w} 
x_j = \lambda y_j + (1-\lambda) z_j, \quad w_\star(x_j, t_j) = \min\{u(y_j, t_j), u(z_j, t_j)\}. 
\end{equation}
Since $x_j$ is a bounded sequence, if either of the sequences $y_j, z_j$ is unbounded, so does the other. Thus we can choose a subsequence such that $|y_j|, |z_j| \to \infty$, for which by  \eqref{coercive} again we have 
\[
u(y_j, t_j)\to \infty, \quad u(z_j, t_j)\to \infty\quad \text{as $j\to \infty$.}
\]
It follows from \eqref{semi-continuity-w} that 
\[ w_\star(x_j, t_j)\to \infty \quad \text{as $j\to \infty$}, \]
which is a contradiction to the fact that $w_\star\leq u$ in $\R^n\times [0, \infty)$. 
Therefore, it is sufficient to assume that $y_j$ and $z_j$ are bounded sequences.  Choosing subsequences $y_j$ and $z_j$ converging to $y_0$ and $z_0$  in $\R^n$ respectively, we can take the limit of \eqref{semi-continuity-w} to obtain 
\[ \liminf_{(x, t) \to (x_0, t_0)} w_\star (x, t) = \min \{u(y_0, t_0), u(z_0, t_0)\} \ge w_\star (x_0, t_0). \] 
Hence, $w_\star \in {\rm LSC}(\mathbb{R}^n \times [0, \infty))$. 
The lower semicontinuity of $w_q$ can be proved similarly.

Let us next proceed to show that $w_\star$ satisfies the supersolution property. Suppose that there exist $(x_0, t_0)\in \R^n\times (0, \infty)$ and $\varphi\in C^2(\R^n\times (0, \infty))$ such that $w_\star-\varphi$ attains a strict minimum at $(x_0, t_0)$.
Without loss of generality we may assume $\varphi>0$ in $\R^n\times(0,\infty)$. 

 In light of Proposition \ref{prop:converge}, there exists a sequence, indexed by $q$, of $(x_q, t_q)\in \R^n\times (0, \infty)$ such that 
$w_q-\varphi$ attains a strict minimum at $(x_q, t_q)$ and 
\begin{equation*}
(x_q, t_q)\to (x_0, t_0), \quad w_q(x_q, t_q)\to w_\star(x_0, t_0)
\quad\text{as} \quad q\to \infty. 
\end{equation*} 
Due to \eqref{coercive}, there exist $y_q,\ z_q\in \R^n$ such that 
\begin{equation}\label{ineq:wq}
x_q=\lambda y_q+(1-\lambda) z_q, \quad
w_q(x_q, t_q)
=\left(\lambda u(y_q, t_q)^q+(1-\lambda) u(z_q, t_q)^q\right)^{\frac{1}{q}}. 
\end{equation}
Shifting $\varphi$ so that $\varphi(x_q, t_q)=w_q(x_q, t_q)$ and letting $v:=u^q$ and $\psi:=\varphi^q$, we see that 
\begin{equation*}
(y, z, t)\mapsto  \lambda v(y, t)+(1-\lambda) v(z, t)-\psi(\lambda y+(1-\lambda) z, t)
\end{equation*}
takes a minimum at $(y_q, z_q, t_q)\in\R^n\times\R^n\times(0,\infty)$. 

By the Crandall-Ishii lemma \cite{CIL}, for any $\varepsilon>0$ small we have $(h_q, \eta_q,  Y_q)\in \overline{P}^{2, -}v(y_q, t_q)$ and $(k_q, \zeta_q, Z_q)\in \overline{P}^{2, -} v(z_q, t_q)$ such that
\begin{equation}\label{ishii-t}
\lambda h_q+ (1-\lambda) k_q=\psi_t(x_q, t_q), 
\quad
\eta_q=\zeta_q=\nabla \psi(x_q, t_q),
\end{equation}
and
\begin{equation}\label{ishii-xd2}
\begin{aligned}
&\begin{pmatrix}
\lambda Y_q & 0\\
0 & (1-\lambda) Z_q
\end{pmatrix}\\
&\geq 
\begin{pmatrix}
\lambda^2 X_q & \lambda(1-\lambda)  X_q\\
\lambda(1-\lambda)  X_q & (1-\lambda)^2  X_q
\end{pmatrix}-\varepsilon\begin{pmatrix}
\lambda^2 X_q & \lambda(1-\lambda)  X_q\\
\lambda(1-\lambda)  X_q & (1-\lambda)^2  X_q
\end{pmatrix}^2, 
\end{aligned}
\end{equation}
where $X_q=\nabla^2 \psi(x_q, t_q)$. 
It follows that 
\[
\lambda \xi^T Y_q\xi+(1-\lambda) \xi^T Z_q\xi\geq \xi^T X_q\xi -C\varepsilon |\xi|^2 
\quad\text{for all} \ \xi\in\R^n, 
\]
where $C>0$ depends on the uniform bound of $|\nabla^2\psi(x_q, t_q)|$.  In other words, we get
\begin{equation}\label{ishii-hessian} 
\lambda Y_q+(1-\lambda)Z_q\geq X_q-C\varepsilon I.
\end{equation}

Since $u$ is a supersolution of \eqref{E1}, it is not difficult to show that $v$ is a supersolution of 
\[
v_t+ G_\beta(v, \nabla v, \nabla^2 v, K\cap \{v(\cdot, t)\leq v(x, t)\})=0
\]
with $\beta=1-{1\over q}$, where $G_\beta$ is the transformed operator given by \eqref{transformed op}.
Adopting the definition of supersolutions at $(y_q, t_q)$ and $(z_q, t_q)$, we have
\begin{equation}\label{ineq:visc}
\begin{aligned}
&h_q+G_\beta(a_q, \eta_q, Y_q, K\cap \{v(\cdot, t_q)\leq v(y_q, t_q)\})\geq 0, \\
&k_q+G_\beta(b_q, \zeta_q, Z_q, K\cap \{v(\cdot, t_q)\leq v(z_q, t_q)\})\geq 0,
\end{aligned}
\end{equation}
where $a_q=v(y_q, t_q), b_q=v(z_q, t_q)$.

Applying Proposition \ref{prop:equiv}, we can divide our argument into the following two cases: 
\begin{itemize}
\item Case 1. $\nabla \varphi(x_0, t_0)\neq 0$,
\item Case 2. $\nabla \varphi(x_0, t_0)= 0, \ \text{and} \ \nabla^2\varphi(x_0,t_0)=0$. 
\end{itemize}
We write $\xi_q=\nabla\psi(x_q, t_q)$ for simplicity of notation.  

In Case 1, by \eqref{ishii-t}, we have $\nabla\psi(x_q, t_q)=\eta_q=\zeta_q\neq 0$ when $q>1$ is sufficiently large. 
Multiplying the first inequality in \eqref{ineq:visc} by $\lambda$ and the second by $1-\lambda$ and then adding them up, by \eqref{ishii-t} we are led to
\[
\begin{aligned}
&\psi_t(x_q, t_q)+\lambda G_\beta(a_q, \xi_q, Y_q, W_\star[x_0, t_0]\})+(1-\lambda) G_\beta(b_q, \xi_q, Z_q, W_\star[x_0, t_0]))\\
&\quad \geq 
\lambda \left(G_\beta(a_q, \xi_q, Y_q, W_\star[x_0, t_0])-G_\beta(a_q, \xi_q, Y_q, U[y_q, t_q])\right)\\
&\qquad+(1-\lambda) \left(G_\beta(b_q, \xi_q, Z_q, W_\star[x_0, t_0])-G_\beta(b_q, \xi_q, Z_q, U[z_q, t_q])\right),
\end{aligned}
\]
where we denote, for any $(x, t)\in \R^n\times (0, \infty)$,
\begin{align*}
&W_\star[x, t]:=K\cap\{w_\star(\cdot, t)\leq w_\star(x, t)\}, \\
&U[x, t]:=K\cap\{u(\cdot, t)\leq u(x, t)\}=K\cap\{v(\cdot, t)\leq v(x, t)\}.
\end{align*}
Applying (F1), (F7) and \eqref{ishii-hessian} and noticing that 
\[
\lambda a_q+(1-\lambda) b_q= w_q(x_q, t_q)^q, 
\]
 we then get
\begin{equation*}
\begin{aligned}
&\psi_t(x_q, t_q)+G_\beta\left(w_q(x_q, t_q)^q, \nabla\psi(x_q, t_q), \nabla^2\psi(x_q, t_q)-C\varepsilon I, W_\star[x_0, t_0]\right)
\\
&\geq \lambda \left(G_\beta(a_q, \xi_q, Y_q, W_\star[x_0, t_0])-G_\beta(a_q, \xi_q, Y_q, U[y_q, t_q])\right)\\
&\qquad+(1-\lambda) \left(G_\beta(b_q, \xi_q, Z_q, W_\star[x_0, t_0])-G_\beta(b_q, \xi_q, Z_q, U[z_q, t_q])\right).
\end{aligned}
\end{equation*}
Rewriting this relation in terms of the operator $F$, we are led to 
\begin{equation}\label{eq:vis}
\begin{aligned}
&\varphi_t(x_q, t_q)+F(w_q(x_q, t_q), \nabla\varphi(x_q, t_q), \nabla^2\varphi(x_q, t_q)-C_q\varepsilon I, W_\star[x_0, t_0])\\
&\quad \geq \lambda \frac{u(y_q, t_q)^{q-1}}{\varphi(x_q, t_q)^{q-1}} D_{1, q}+ (1-\lambda) \frac{u(z_q, t_q)^{q-1}}{\varphi(x_q, t_q)^{q-1}}D_{2, q},
\end{aligned}
\end{equation}
where 
\[
C_q={C\over q}\varphi(x_q, t_q)^{1-q}, 
\] 
\begin{equation*}
\begin{aligned}
&D_{1, q}=F(u(y_q, t_q), \nabla \varphi(x_q, t_q), Y_q', W_\star[x_0, t_0])-F(u(y_q, t_q), \nabla \varphi(x_q, t_q), Y_q', U[y_q, t_q]),\\
&D_{2, q}=F(u(z_q, t_q), \nabla \varphi(x_q, t_q), Z_q', W_\star[x_0, t_0])-F(u(z_q, t_q), \nabla \varphi(x_q, t_q), Z_q', U[z_q, t_q])
\end{aligned}
\end{equation*}
with
\begin{equation*}
\begin{aligned}
&Y_q'={1\over q}\varphi(x_q, t_q)^{1-q}Y_q-{q-1\over q^2} \varphi(x_q, t_q)^{1-2q} \nabla\varphi(x_q, t_q)\otimes \nabla \varphi(x_q, t_q), \\
&Z_q'={1\over q}\varphi(x_q, t_q)^{1-q} Z_q-{q-1\over q^2} \varphi(x_q, t_q)^{1-2q} \nabla\varphi(x_q, t_q)\otimes \nabla \varphi(x_q, t_q). 
\end{aligned}
\end{equation*}

The assumption that $u\geq c_0$ yields 
\[
\begin{aligned}
&\frac{\lambda u(y_q, t_q)^{q-1}+(1-\lambda) u(z_q, t_q)^{q-1}}{\varphi(x_q, t_q)^{q-1}}=\frac{\lambda u(y_q, t_q)^{q-1}+(1-\lambda) u(z_q, t_q)^{q-1}}{w_q(x_q, t_q)^{q-1}}\\
&\leq \frac{\lambda u(y_q, t_q)^{q}+(1-\lambda) u(z_q, t_q)^{q}}{c_0w_q(x_q, t_q)^{q-1}}= \frac{w_q(x_q, t_q)^q}{c_0w_q(x_q, t_q)^{q-1}}=\frac{w_q(x_q, t_q)}{c_0},
\end{aligned}
\]
which implies 
\begin{equation}\label{coefficient bound}
\sup_{q>1}\left\{\frac{\lambda u(y_q, t_q)^{q-1}+(1-\lambda) u(z_q, t_q)^{q-1}}{\varphi(x_q, t_q)^{q-1}}\right\}<\infty.
\end{equation}
Let us proceed to estimate $D_{1, q}, D_{2, q}$ in \eqref{eq:vis}. Note that by \eqref{ineq:wq}  
\begin{equation*}
\limsup_{q\to \infty} u(y_q, t_q)
\leq \limsup_{q\to \infty} w_q(x_q, t_q)=w_\star(x_0, t_0).
\end{equation*}
Also, it is easily seen that
\[
w_\star(\cdot, t_0)=u_{\star, \lambda}(\cdot, t_0) 
\leq 
u(\cdot,t_0)
\le \liminfs_{q\to \infty} u(\cdot,  t_q)\quad \text{in $\R^n$}.
\] 
Thus, in light of Lemma \ref{lem meas} we have 
\begin{equation*}
m(U[y_q,  t_q]\setminus W_\star[x_0, t_0])\to 0\quad \text{as $q\to \infty$.}
\end{equation*} 

Since (F4) implies 
\[
\begin{aligned}
&F(u(y_q, t_q), \nabla \varphi(x_q,  t_q), Y_q', W_\star[x_0, t_0])\\
&\geq F(u(y_q, t_q), \nabla \varphi(x_q,  t_q), Y_q', W_\star[x_0, t_0]\cap U[y_q,  t_q]),
\end{aligned}
\]
in view of \eqref{usc-set} in (F3), we deduce that 
\[
D_{1, q}\geq -\omega_R\bigg(m\big(U[y_q,  t_q]\setminus W_\star[x_0, t_0]\big)\bigg)
\]
for $q>1$ sufficiently large, where $R=|\nabla \varphi(x_0, t_0)|+1$.
Similarly, we have 
\[
D_{2, q}\geq -\omega_R\bigg(m\big(U[z_q,  t_q]\setminus W_\star[x_0, t_0]\big)\bigg)
\]
with
\begin{equation*}
m(U[z_q,  t_q]\setminus W_\star[x_0, t_0])\to 0\quad \text{as $q\to \infty$.}
\end{equation*}
Hence, thanks to \eqref{coefficient bound}, sending $\varepsilon \to 0$ and then $q\to \infty$ in \eqref{eq:vis}, 
we get 
\[
\varphi_t(x_0, t_0)+F(w_\star(x_0, t_0), \nabla \varphi(x_0, t_0), \nabla^2 \varphi(x_0, t_0), K\cap\{u_{\star, \lambda}(\cdot, t_0)\leq u_{\star, \lambda}(x_0, t_0)\})\geq 0.
\]

The proof in Case 1 is complete. Let us next discuss Case 2. If $\nabla \varphi(x_q,  t_q)\neq 0$ along a subsequence, then passing to the limit of  \eqref{eq:vis} as $\varepsilon\to 0$ and $q\to \infty$ via the subsequence, 
by (F6) we get 
\begin{equation}\label{ineq:sup2}
\varphi_t(x_0, t_0)+\mu(w_\star(x_0, t_0))\geq 0,
\end{equation}
as desired.

It remains to consider the case when $\nabla \varphi(x_q,  t_q)= 0$ for all $q>1$ large. By \eqref{ishii-xd2}, we deduce that 
\begin{equation}\label{van-grad1}
Y_q\geq \lambda(1-\varepsilon) X_q, \quad Z_q\geq (1-\lambda)(1-\varepsilon)X_q,
\end{equation}
where $X_q=\nabla^2\varphi(x_q,  t_q)\to 0$ as $q\to \infty$.  As in Case 1, we take $\beta=1-{1\over q}$ with $q>0$ large.
Adopting the definition of supersolutions in this case, we have 
\[
\begin{aligned}
& 
h_q+G_\beta^\ast(v(y_q, t_q), 0, Y_q, K \cap\{v(\cdot,  t_q)\leq v(y_q,  t_q)\})\geq 0, \\
& 
k_q+G_\beta^\ast(v(z_q, t_q), 0, Z_q, K \cap\{v(\cdot,  t_q)\leq v(z_q,  t_q)\})\geq 0,
\end{aligned}
\]
which by (F4) yields
\[
\begin{aligned}
&h_q+G_\beta^\ast(v(y_q, t_q), 0, Y_q, K)\geq 0, \\
& k_q+G_\beta^\ast(v(z_q, t_q), 0, Z_q, K)\geq 0.
\end{aligned}
\]
In view of \eqref{van-grad1} and (F6), we have 
\[
\begin{aligned}
&(1-\beta)h_q+v(y_q, t_q)^{\beta}\mu(v(y_q, t_q)^{1-\beta})\geq -v(y_q, t_q)^{\beta}\omega(|X_q|),\\
&(1-\beta)k_q+v(z_q, t_q)^{\beta}\mu(v(z_q, t_q)^{1-\beta})\geq -v(z_q, t_q)^{\beta}\omega(|X_q|).
\end{aligned}
\]
It follows from (F7) that 
\[
\begin{aligned}
&{1\over q}(\lambda h_q+(1-\lambda) k_q)+w_q(x_q, t_q)^{q-1} \mu(w_q(x_q, t_q))\\
&\geq -\lambda v(y_q, t_q)^{1-{1\over q}}\omega(|X_q|)-(1-\lambda)v(z_q, t_q)^{1-{1\over q}}\omega(|X_q|),
\end{aligned}
\]
which, together with the relation $w_q(x_q, t_q)=\varphi(x_q, t_q)$, yields, 
\[
\begin{aligned}
&{1\over q}\varphi(x_q, t_q)^{1-q}(\lambda h_q+(1-\lambda) k_q)+\mu(w_q(x_q, t_q))\\
&\geq -w_q(x_q, t_q)^{1-q}\left (\lambda u(y_q, t_q)^{q-1}+ (1-\lambda)u(z_q, t_q)^{q-1}\right) \omega(|X_q|).
\end{aligned}
\]
Noticing that, due to \eqref{ishii-t}, 
\[
\varphi_t(x_q,  t_q)={1\over q} \varphi(x_q,  t_q)^{1-q} \psi_t(x_q,  t_q)={1\over q} \varphi(x_q,  t_q)^{1-q} (\lambda h_q+(1-\lambda)k_q), 
\]
we obtain 
\begin{equation}\label{van-grad2}
\varphi_t(x_q, t_q)+\mu(w_q(x_q, t_q))\geq - \frac{\lambda u(y_q, t_q)^{q-1}+(1-\lambda) u(z_q, t_q)^{q-1}}{w_q(x_q, t_q)^{q-1}}\omega(|X_q|).
\end{equation}
In view of \eqref{coefficient bound}, letting $q\to \infty$ in \eqref{van-grad2}, we again end up with \eqref{ineq:sup2}.
\end{proof}



\begin{proof}[Proof of Theorem \ref{thm:main}]
By the quasiconvexity of $u_0$, we have $u_{\star, \lambda}(\cdot, 0)=u_0$ in $\R^n$.
Moreover, since $c_0\leq u_{\star, \lambda}\leq u$ holds in $\R^n\times [0, \infty)$, $u_{\star, \lambda}$ obviously satisfies the growth condition \eqref{growth comp}.
The relation \eqref{main-goal2} is then an immediate consequence of Lemma \ref{lem:super-general} and the comparison principle, Theorem \ref{thm:comp}. Noticing that \eqref{main-goal2} implies the quasiconvexity of $u$ in space, we complete the proof of Theorem \ref{thm:main}, 
\end{proof}

\begin{rem}\label{rem lip}
We can weaken the assumption (F7) if the solution $u$ satisfies additional regularity assumptions.
If $x\mapsto u(x, t)$ is assumed to be $L$-Lipschitz uniformly for all $t\geq 0$, we can show Lemma \ref{lem:super-general} and thus Theorem \ref{thm:main} under the following relaxed version of \eqref{concave1} in (F7): for any $\beta<1$ close to $1$,
$(r, X)\mapsto G_\beta(r, p, X, A)$ is concave in $[c_0, \infty)\times \bS^n$ for any $p\in \R^n\setminus\{0\}$ with $|p|\leq L$ and $A\in \cB_K$. 
\end{rem}

Although our method relies on the power convex envelope $w_q$, in general one cannot expect the preservation of spatial power convexity to hold for any finite exponent $q$, as indicated by the simple example below. 
\begin{ex}\label{ex:1}
Consider the first order equation in two space dimensions:
\begin{equation}\label{eq:ex2}
u_t+|\nabla u| m(K\cap \{u(\cdot, t)<u(x, t)\})=0 \quad \text{in $\R^2\times (0, \infty)$}, 
\end{equation}
where $K\subset \R^2$ is taken to be the closed ball $\ol{B_R(0)}$ centered at $0$ with radius $R>0$.
Let the initial value be
\begin{equation}\label{eq:ex2initial}
u_0(x)=|x|+1,\quad \text{ $x\in \R^2$.}
\end{equation}
Since $u_0$ is radially symmetric and attains a minimum at $x=0$, we can consider radially symmetric solutions $u(x,t)=\varphi(|x|,t)$ 
(see \cite{GMT}), where $\varphi$ satisfies 
\begin{equation}\label{eq:ex1}
\left\{
\begin{array}{ll}
\varphi_t+|\varphi_r|\pi\min\{R^2,r^2\}=0 & \text{for} \ r>0, t>0, \\
\varphi(0, t)=1 & \text{for}\ t> 0,\\
\varphi(r,0)=r+1 & \text{for} \ r\ge0. 
\end{array}
\right. 
\end{equation}
Using the optimal control formula (see \cite{BC, Hung-book} for instance), we can express the viscosity solution to \eqref{eq:ex1} as 
\[
\varphi(r,t)=
\inf\big\{\varphi(\gamma(t),0)\mid 
\gamma(0)=r, 
|\dot{\gamma}(s)|\le \pi\min\{R^2, \gamma(s)^2\} \ \text{for all} \ s\in[0,t]
\big\}.  
\]
The optimal control $\gamma^\ast$ in this setting is rather simple. Indeed, for any given $r\geq 0$ and $t\geq 0$, 
if $r\le R$ then $\gamma^\ast$ solves 
\[
\left\{
\begin{array}{ll}
\dot{\gamma}^\ast(s)=-\pi\gamma^\ast(s)^2 & \text{for} \ s>0, \\
\gamma^\ast(0)=r, &  
\end{array}
\right.
\]
and thus $\gamma^\ast(t)=r/(1+\pi r t)$. 
If $r>R+\pi R^2 t$, then $\gamma^\ast$ is the solution to 
\[
\left\{
\begin{array}{ll}
\dot{\gamma}^\ast(s)=-\pi R^2 & \text{for} \ s>0, \\
\gamma^\ast(0)=r, &  
\end{array}
\right. 
\]
which is $\gamma^\ast(t)=r-\pi R^2 t$. 
Finally, if $R<r<R+\pi R^2 t$, then letting $\tilde{t}:=(r-R)/(\pi R^2)$,
we see that $\gamma^\ast$ satisfies
\[
\left\{
\begin{array}{ll}
\dot{\gamma}^\ast(s)=-\pi R^2 & \text{for} \ 0<s<\tilde{t}\\
\gamma^\ast(0)=r, &  
\end{array}
\right. 
\quad\text{and}\quad
\left\{
\begin{array}{ll}
\dot{\gamma}^\ast(s)=-\pi\gamma^\ast(s)^2 & \text{for} \ \tilde{t}<s<t, \\
\gamma^\ast(\tilde{t})=R. &  
\end{array}
\right. 
\]
By elementary computations, we obtain 
\[
\gamma^\ast(t)=\frac{R^2}{\pi R^2 t-r+2R}. 
\]
Hence, the unique viscosity solution of \eqref{eq:ex2} can be written as
\begin{equation}\label{eq:ex2sol}
u(x, t)=\begin{cases}
\frac{|x|}{1+\pi |x| t}+1 & \text{if $|x|\leq R$,}\\
\frac{R^2}{\pi R^2t-{|x|}+2R}+1 & \text{if $R<|x|\leq R+\pi R^2 t$,}\\
|x|-\pi R^2 t+1 & \text{if $|x|> R+\pi R^2 t$.}
\end{cases}
\end{equation}
It is not difficult to see that $x\mapsto u(x, t)$ is not convex (for $|x|<R$) for any $t>0$ in spite of the convexity of $u_0$. 

Moreover, since the operator $F$ in this case is geometric, satisfying $F(cp, A)=cF(p, A)$ for all $c\in \R$, we can follow \cite[Theorem 4.2.1]{GBook} to prove that $g\circ u$ is still a solution of \eqref{eq:ex2} for any nondecreasing continuous function $g: \R\to \R$. 
In particular, we deduce that $u^{1\over q}$ (with $u$ given by \eqref{eq:ex2sol}) is the unique solution of \eqref{eq:ex2} with initial value $u_0^{1\over q}$ for any $q\geq 1$. This means that in general solutions of \eqref{eq:ex2} may fail to preserve $q$-convexity for every $q\geq 1$.
\end{ex}

In the example above, we obtained the solution of \eqref{eq:ex2} and \eqref{eq:ex2initial} by means of a control-based interpretation. It is not straightforward at all to find a representation formula for solutions of more general problems. If we slightly change the equation and consider
\[
u_t+\big\{m(K\cap \{u(\cdot, t)<u(x, t)\})-1\big\}|\nabla u| =0 \quad  \text{in $\R^n\times (0, \infty)$,}
\]
then the Hamiltonian, depending in a nonlocal manner on $u$, is neither coercive nor convex in general. The optimal control interpretation in this case becomes rather complicated. We study this problem in our upcoming work \cite{KLM2}.

\section{Applications}\label{sec:applications}

To conclude our paper, let us provide several applications of our quasiconvexity preserving result. The parabolic operators in the examples below satisfy all of the assumptions in Theorem \ref{thm:main} including the assumption (F7). We also verify the assumption (I) for our existence result, Theorem \ref{thm:exist}.

\subsection{Nonlocal geometric flows}\label{sec:nonlocal-geo}
Let us study the anisotropic surface evolution given by \eqref{eq:ex1}. 
For this type of evolution, $\xi$ is the Cahn-Hoffman vector associated to an (extended) surface energy density $\gamma\in C(\R^n)\cap C^2(\R^n\setminus \{0\})$, which is  positively homogeneous of degree $1$ in $\R^n$, and satisfies $\gamma>0$ and $\nabla \gamma=\xi$ in $\R^n\setminus \{0\}$.
In this case, the level set formulation leads us to the following PDE:
\begin{equation}\label{nonlocal-geometric}
u_t+a|\nabla u|+b |\nabla u| m(K\cap \{u(\cdot, t)<u(x, t)\}) -c|\nabla u|\tr\left(\nabla^2 \gamma\left(-{\nabla u}\right)\nabla^2 u\right)=0.
\end{equation}
in $\R^n\times (0, \infty)$, where $a\in\R$, $b\ge 0$ and $c\ge0$ are given constants.  
The operator $F$ can be expressed as 
\[
F(p, X, A)=a|p|
+b|p| m(A)-|p|\tr\left(\nabla^2 \gamma(-p)X\right).
\]
We can show that $F$ satisfies the condition (F7). In fact, we can verify this property for a more general class of geometric operators $F$ and thus recover the results on convexity preserving property in \cite{C}. Suppose that \eqref{geometrical-pro} holds.
 It is clear that the transformed operator 
\[
G_\beta(r, p, X, A)=F(p, X, A)
\] 
for all $0<\beta<1$. Then \eqref{concave1} holds provided that $X\mapsto F(p, X, A)$ is concave, i.e., 
\[
\lambda F(p, X, A)+(1-\lambda)F(p, Y, A)\leq F(p, \lambda X+(1-\lambda)Y, A)
\]
for all $\lambda\in (0, 1)$, $p\in \R^n\setminus \{0\}$, $X, Y\in\bS^n$ and $A\in \cB_K$.
In addition, since $\mu=0$ in this case, we have \eqref{concave2} as well. Hence, (F7) is verified.

Furthermore, for the anisotropic surface evolution equation \eqref{nonlocal-geometric} with $\gamma$ convex, we can construct a  function $\phi$ satisfying (I) provided that $u_0$ is uniformly continuous, positive and coercive in $\R^n$. 
Since \eqref{nonlocal-geometric} is geometric, 
without loss of generality we may assume that $u_0 \ge c_0$ for some $c_0>0$. 
We can choose an increasing and coercive function $\sigma \in C^\infty([0, \infty)) \cap \UC([0,\infty))$ such that 
\begin{equation}\label{setting-sigma}
\inf_{\gamma(x) \ge R} u_0(x) \ge \sigma(R) \ge c_0 \quad \text{for any} \; \; R \in [0, \infty). 
\end{equation}
Take
\begin{equation}\label{setting-phi} 
\phi(x,t) := 
\sigma\left(\max\{\gamma(x) - Ct, 0\}\right), 
\end{equation}
where 
\[ C := \left(\sup_{p \in \mathbb{R}^n \setminus \{0\}}|\nabla \gamma(x)|\right) \left\{|a| + b m(K)\right\}. \]
Note that the supremum of $|\nabla \gamma(x)|$ can be attained since $\nabla \gamma(x)$ is positively homogeneous of degree $0$. 
Since any level set $\{x \in \mathbb{R}^n: \gamma(x) = h\}$ is a scaled Wulff shape and its anisotropic mean curvature $\tr \left(\nabla^2 \gamma(-\nabla \gamma(x)) \nabla^2 \gamma(x)\right)\geq 0$
if $\gamma$ is convex,  by \eqref{geometrical-pro} we have 
\[ \tr(\nabla^2 \gamma(-\nabla \phi(x,t)) \nabla^2 \phi(x,t)) = \sigma'(\gamma(x) - Ct) \tr \left(\nabla^2 \gamma(-\nabla \gamma(x)) \nabla^2 \gamma(x)\right) \ge 0 \]
if $\gamma(x) > Ct$. 
Therefore, we have 
\begin{align*}
&\;\phi_t(x,t) + F(\nabla \phi(x,t), \nabla^2 \phi(x,t), K \cap \{\phi(\cdot,t) < \phi(x,t)\}) \\
\le &\; \sigma'(\gamma(x) - Ct) \left\{ -C + |\nabla \gamma(x)| \Big(- a 
 + b m(K)\Big)\right\} \le 0  
\end{align*}
which ensures (v) in (I). 
The other conditions in (I) hold obviously due to the choice of $\sigma$ and the definition of $\phi$.

\subsection{Nonlocal evolution equations depending on $u$}

We now consider a slightly different type of nonlocal curvature flows,  which resembles those local ones arising in the study of crystal growth  \cite{TG}. A typical equation reads
\begin{equation}\label{nonlocal-evolution}
u_t+V(u)+|\nabla u| m(K\cap \{u(\cdot, t)<u(x, t)\}) -\tr\left(\left(I-{\nabla u\otimes \nabla u\over |\nabla u|^2}\right)\nabla^2 u\right)=0,
\end{equation}
in $\R^n\times (0, \infty)$, where $V\in C^2(\R)$ is a given function.  Then the operator $F$ is 
\[
F(r, p, X, A)=V(r)+|p| m(A)-\tr\left(\left(I-{p\otimes p\over |p|^2}\right)X\right).
\]
We see that $F$ satisfies (F7) under appropriate assumptions on $V$. Indeed, for $p\neq 0$, the operator $G_\beta$ as in  \eqref{transformed op} is 
\[
G_\beta(r, p, X, A)={1\over 1-\beta} r^{\beta} V(r^{1-\beta})+|p| m(A)-\tr\left(\left(I-{p\otimes p\over |p|^2}\right)X\right).
\]
It is clear that, for $0<\beta<1$ close to $1$, the condition \eqref{concave1} holds in this case if $r\mapsto r^\beta V(r^{1-\beta})$ is concave in $(0, \infty)$. By direct computations, we get
\[
(r^\beta V(r^{1-\beta}))''=\beta(1-\beta)r^{-1} \left(V'(r^{1-\beta}) -r^{\beta-1} V(r^{1-\beta})\right)+(1-\beta)^2r^{-\beta}V''(r^{1-\beta}). 
\]
Thus, a sufficient condition to guarantee the concavity of $r\mapsto r^\beta V(r^{1-\beta})$ is that $V''\leq 0$ in $(0, \infty)$ and $V(0)\geq 0$. In fact, these conditions yield
\[
V'(r^{1-\beta})-{1\over r^{1-\beta}}V(r^{1-\beta})= {1\over r^{1-\beta}}\left(\int_0^{r^{1-\beta}} sV''(s)\, ds-V(0)\right)\leq 0
\]
and therefore $(r^\beta V(r^{1-\beta}))''\leq 0$ for $r>0$. In order for (F1)(F2) to hold, we also need to assume that $V$ is nondecreasing and bounded. 

In this case, we can construct $\phi$ satisfying (I) in a similar way as in \eqref{setting-phi} if  $u_0\in UC(\R^n)$ satisfies 
\begin{equation}\label{setting-u0}
 \lim_{R \to \infty} \inf_{|x| \ge R} \frac{u_0(x)}{|x|} > 0, \quad \inf_{x \in \mathbb{R}^n} u_0(x) \ge c_0 
 \end{equation}
for some $c_0>0$. We may choose an increasing function $\sigma \in C^\infty([0,\infty)) \cap \UC([0,\infty))$ such that \eqref{setting-sigma} holds and $\sigma'(s)>0$ when $s>0$ is large.
Note that the second assumption in \eqref{setting-u0} is needed because of the $u$-dependent term $V(u)$ in the equation \eqref{nonlocal-evolution}. 
Using this function $\sigma$, the subsolution $\phi$ can be constructed as in \eqref{setting-phi} by replacing $\gamma$ and $C$ respectively by 
\[ \gamma(x) := |x|, \quad C:= \left(\sup_{r \in \mathbb{R}} V(r)\right)/\left(\inf_{R \in [0,\infty)} \sigma'(R)\right) + m(K). \]

\subsection{Viscous Hamilton-Jacobi equations with sublinear gradient terms}\label{sec:vishj}
Our result applies to local problems as well. For example, we can consider 
\begin{equation}\label{hj eq}
u_t-\Delta u+ a|\nabla u|^\alpha=0 
\end{equation}
in $\R^n\times (0, \infty)$, where $a>0$ and $\alpha\in (0, 1)$ are given coefficients. 

The operator $F$ corresponding to \eqref{hj eq} is 
\[
F(p, X)=-\tr X+a|p|^\alpha.
\]
It is clear that (F1)--(F6) hold. 
Let us consider solutions $u$ that are uniformly positive in $\R^n\times [0, \infty)$ and Lipschitz, coercive in space. Assume that $u\geq c_0$ in  $\R^n\times [0, \infty)$  for some $c_0>0$ and $u$ is $L$-Lipschitz in space. 
By Remark \ref{rem lip}, for the assumption (F7), we only need to verify the concavity of $(r, X)\mapsto G_\beta(r, p, X)$ in $[c_0, \infty)\times \bS^n$ for all $p\in \R^n$ with $|p|\leq L$ when $0<\beta<1$ is taken sufficiently close to $1$.

Note that 
\begin{equation}\label{ex:vHJ}
G_\beta(r, p, X)=-\tr X+\beta r^{-1} |p|^2+a(1-\beta)^{\alpha-1} r^{\beta(1-\alpha)}|p|^\alpha.
\end{equation} 
It suffices to show 
\[
V(r, p)=\beta r^{-1} |p|^2+a(1-\beta)^{\alpha-1} r^{\beta(1-\alpha)}|p|^\alpha
\]
is concave with respect to $r\in [c_0, \infty)$ for $\beta$ arbitrarily close to $1$. By direct calculations, we get
\[
\begin{aligned}
V_{rr}(r, p)&= 2\beta r^{-3}|p|^2+a(1-\beta)^{\alpha-1} \beta(1-\alpha)(\beta-\alpha\beta-1)r^{\beta(1-\alpha)-2}|p|^\alpha\\
&=r^{-3}|p|^\alpha \left(2\beta|p|^{2-\alpha}+a(1-\beta)^{\alpha-1} (\beta-\alpha\beta) (\beta-\alpha\beta-1)r^{\beta(1-\alpha)+1}\right).
\end{aligned}
\]
For $|p|\leq L$ and $r\geq c_0$, we have 
\[
\begin{aligned}
&2\beta|p|^{2-\alpha}+a(1-\beta)^{\alpha-1} (\beta-\alpha\beta) (\beta-\alpha\beta-1)r^{\beta(1-\alpha)+1}\\
&\leq 2L^{2-\alpha}+a(1-\beta)^{\alpha-1}(\alpha-\alpha^2)(\alpha-\alpha^2-1)\min\{c_0^{\alpha(1-\alpha)+1},  c_0^{2-\alpha}\}
\end{aligned}
\]
for all $\alpha\leq \beta <1$. Noticing that $\alpha-\alpha^2-1<0$ and $(1-\beta)^{\alpha-1}\to \infty$ as $\beta\to 1$, when $0<\beta<1$ is sufficiently close to $1$, we have 
\[
2\beta|p|^{2-\alpha}+a(1-\beta)^{\alpha-1} \beta(1-\alpha) (\beta-\alpha\beta-1)r^{\beta(1-\alpha)+1}<0
\]
and therefore $V_{rr}(r, p)\leq 0$ for $|p|\leq L$ and $r\geq c_0$.

In this case, we still have the existence of the function $\phi$ satisfying (I) if $u_0\in UC(\R^n)$ fulfills \eqref{setting-u0}. It can be constructed as 
\[ \phi(x,t) := 
k\left(\max\left\{ |x| - R -ak^{\alpha-1} t,\ 0\right\}\right) + c_0 
\] 
for some $k, R > 0$. 
By direct calculations, we get, for $|x|> R+ak^{\alpha-1} t$, 
\[
\phi_t(x, t)-\Delta \phi(x, t)+a|\nabla \phi(x, t)|^\alpha= -ak^\alpha-{n-1\over |x|}+ak^\alpha\leq 0.
\]
It is thus not difficult to verify that such a function $\phi$ meets our needs. 

Finally it is worth emphasizing that the quasiconvexity preserving property fails to hold for the usual heat equation, i.e., \eqref{hj eq} with $a=0$, as pointed out in \cite{IS, IST2}. 
Indeed, in our analysis, the function $G_\beta$ given by \eqref{ex:vHJ} with $a=0$ does not satisfy (F7).

\bibliographystyle{abbrv}

\begin{thebibliography}{10}

\bibitem{ACM}
O. Alvarez, P. Cardaliaguet, R. Monneau, 
Existence and uniqueness for dislocation dynamics with nonnegative velocity.  
\emph{Interfaces Free Bound.}, 7 (2005), 415--434. 

\bibitem{ALL}
O.~Alvarez, J.-M. Lasry, P.-L. Lions,
\newblock Convex viscosity solutions and state constraints.
\newblock {\em J. Math. Pures Appl.}, (9) 76 (1997), 265--288.

\bibitem{BC}
M.~Bardi, I.~Capuzzo-Dolcetta, 
\newblock {\em Optimal control and viscosity solutions of Hamilton-Jacobi-Bellman equations.} 
\newblock Systems \& Control: Foundations \& Applications. Birkh\"{a}user, Boston, 1997.

\bibitem{BL} 
G. Barles, O. Ley, 
Nonlocal first-order Hamilton-Jacobi equations modelling dislocations dynamics. 
\emph{Comm. Partial Differential Equations}, 31 (2006), 1191--1208. 

\bibitem{BLM}
G. Barles, O. Ley, H. Mitake, 
Short time uniqueness results for solutions of nonlocal and non-monotone geometric equations. 
\emph{Math. Ann.}, 352 (2012), 409--451. 


\bibitem{BP}
B. Bian, P. Guan, 
A microscopic convexity principle for nonlinear partial differential equations. 
\emph{Invent. Math.}, 177 (2009), 307--335.

\bibitem{CGM}
L. Caffarelli, P. Guan, X. N. Ma, 
A constant rank theorem for solutions of fully nonlinear elliptic equations,
\emph{Comm. Pure Appl. Math.},  60 (2007), 1769--1791.

\bibitem{C}
P. Cardaliaguet, 
On front propagation problems with nonlocal terms.  
\emph {Adv. Differential Equations}, 5 (2000), 213--268. 

\bibitem{CW}
A. Chau, B. Weinkove,
Counterexamples to quasiconcavity for the heat equation, 
\emph{Int. Math. Res. Not. IMRN}, 22 (2020), 8564--8579.

\bibitem{CGG}
Y.~G. Chen, Y.~Giga, S.~Goto.
\newblock Uniqueness and existence of viscosity solutions of generalized mean
  curvature flow equations.
\emph {J. Differential Geom.}, 33 (1991), 749--786.

\bibitem{CHL}
X. Chen, D. Hilhorst, E. Logak, 
Asymptotic behavior of solutions of an Allen-Cahn equation with a nonlocal term, 
\emph{Nonlinear Anal.}, 28 (1997), 1283--1298.  

\bibitem{CIL}
M.~G. Crandall, H.~Ishii, P.-L. Lions, 
User's guide to viscosity solutions of second order partial
  differential equations,
\emph{Bull. Amer. Math. Soc. (N.S.)}, 27 (1992),1--67.

\bibitem{CS}
A. Colesanti, P. Salani, 
Quasi-concave envelope of a function and convexity of level sets of solutions to elliptic equations,
\emph{Math. Nachr.}, 258 (2003), 3--15.

\bibitem{DKS}
F. Da Lio, C. I. Kim, D. Slep\v{c}ev, 
Nonlocal front propagation problems in bounded domains with Neumann-type boundary conditions and applications, 
\emph{Asymptot. Anal.}, 37 (2004), 257--292. 

\bibitem{DK}
J. I. Diaz, B. Kawohl, 
On convexity and starshapedness of level sets for some nonlinear elliptic and parabolic problems on convex rings. 
\emph{J. Math. Anal. Appl.}, 177 (1993), 263--286.

\bibitem{GBook}
Y.~Giga.
{\em Surface evolution equations. A level set approach. } Monographs in
  Mathematics, Volume~99, Birkh\"auser Verlag, Basel, 2006.


\bibitem{GGIS}
Y. Giga, S. Goto, H. Ishii, M.-H. Sato, 
Comparison principle and convexity preserving properties for singular degenerate parabolic equations on unbounded domains,
\emph{Indiana Univ. Math. J.}, 40 (1991), 443--470. 

\bibitem{GMT}
Y. Giga, H. Mitake, H. V. Tran, 
On asymptotic speed of solutions to level-set mean curvature flow equations with driving and source terms, 
\emph{SIAM J. Math. Anal.} 48 (2016), 3515--3546. 

\bibitem{HNS}
F. Hamel, N. Nadirashvili, Y.Sire, 
Convexity of level sets for elliptic problems in convex domains or convex rings: two counterexamples, 
\emph{Amer. J. Math.}, 138 (2016), 499--527.


\bibitem{ILS}
K. Ishige, Q. Liu, P. Salani, 
Parabolic Minkowski convolutions and concavity properties of viscosity solutions to fully nonlinear equations,
\emph{J. Math. Pures Appl. (9)}, 141 (2020), 342--370.


\bibitem{IS}
K. Ishige, P. Salani, 
Is quasi-concavity preserved by heat flow? 
\emph{Arch. Math.}, (Basel) 90 (2008), 450--460. 


\bibitem{IS0}
K. Ishige, P. Salani, Parabolic quasi-concavity for solutions to parabolic problems in convex rings, 
\emph{Math. Nachr.}, 283 (2010), 1526--1548.

\bibitem{IS1}
K. Ishige, P. Salani, 
On a new kind of convexity for solutions of parabolic problems, 
\emph{Discrete Contin. Dyn. Syst.}, 4 (2011), 851--864.


\bibitem{IS3}
K. Ishige, P. Salani, 
Parabolic power concavity and parabolic boundary value problems, 
\emph{Math. Ann.}, 358 (2014), 1091--1117.

\bibitem{IST}
K. Ishige, P. Salani, A. Takatsu, 
To logconcavity and beyond,
\emph{Commun. Contemp. Math.}, 22 (2020), 1950009, 17 pp.

\bibitem{IST2}
K. Ishige, P. Salani, and A. Takatsu.
Power concavity and dirichlet heat flow, 
preprint, https://arxiv.org/abs/2105.02574, 2021.


\bibitem{Ju}
P. Juutinen, 
Concavity maximum principle for viscosity solutions of singular equations,
\emph{NoDEA Nonlinear Differ. Equ. Appl.}, 17 (2010), 601--618.

\bibitem{KLM2}
T. Kagaya, Q. Liu, H. Mitake,
A control-based interpretation for nonlocal Hamilton-Jacobi equations and applications to evolution of parallel surfaces, 
in preparation. 

\bibitem{Ka}
B. Kawohl, 
\emph{Rearrangements and convexity of level sets in PDE.}
 {Lecture Notes in Mathematics}, Volume~1150. Springer-Verlag, Berlin, 1985.

\bibitem{Ke}
A. U. Kennington, 
Power concavity and boundary value problems, 
\emph{Indiana Univ. Math. J.}, 34 (1985), 687--704.

\bibitem{KK}
I. Kim, D. Kwon, 
On mean curvature flow with forcing, 
\emph{Comm. Partial Differential Equations}, 45 (2020), 414--455.

\bibitem{Ko}
N.~J. Korevaar, 
Convex solutions to nonlinear elliptic and parabolic boundary value problems, 
\emph{Indiana Univ. Math. J.}, 32 (1983), 603--614.

\bibitem{LSZ}
Q. Liu, A. Schikorra, X. Zhou, 
A game-theoretic proof of convexity preserving properties for motion by curvature, 
\emph{Indiana Univ. Math. J.}, 65 (2016), 171--197.

\bibitem{Sl}
D.~Slep\v{c}ev.
\newblock Approximation schemes for propagation of fronts with nonlocal
  velocities and {N}eumann boundary conditions, 
\newblock {\em Nonlinear Anal.}, 52 (2003), 79--115.

\bibitem{Sr}
A. Srour, 
Nonlocal second-order geometric equations arising in tomographic reconstruction, 
\emph{Nonlinear Anal.}, 70 (2009), 1746--1762. 

\bibitem{Hung-book}
H.~V.~Tran, \emph{Hamilton--Jacobi equations: theory and applications}, AMS Graduate Studies in Mathematics, Volume~213, American Mathematical Society, 2021.


\bibitem{TG}
Y.-H. R. Tsai, Y. Giga, 
A numerical study of anisotropic crystal growth with bunching under very singular vertical diffusion,
preprint (2003). 


\end{thebibliography}


\end{document}